\newtheorem{theorem}{Theorem}
\newtheorem{lemma}[]{Lemma}
\newtheorem{definition}[]{Definition}
\newtheorem{example}[]{Example}
\newcommand{\bx}{\mathbf x}
\newcommand{\by}{\mathbf y}
\newcommand{\zero}{\mathbf 0}
\newcommand\BibTeX{{\rmfamily B\kern-.05em \textsc{i\kern-.025em b}\kern-.08em
T\kern-.1667em\lower.7ex\hbox{E}\kern-.125emX}}
\begin{document}

\title{An inexact Noda iteration for computing the smallest
eigenpair of a large irreducible monotone matrix}
\author{Ching-Sung Liu}

\begin{abstract}
In this paper, we present an inexact Noda iteration with inner-outer iterations for finding the smallest eigenvalue and the associated eigenvector of an irreducible monotone matrix. The proposed inexact Noda iteration contains two main relaxation steps for computing the smallest eigenvalue and the associated eigenvector, respectively. These relaxation steps depend on the relaxation factors, and we analyze how the relaxation factors in the relaxation steps affect the convergence of the outer iterations. By considering two different relaxation factors for solving the inner linear systems involved, we prove that the convergence is globally linear or superlinear, depending on the relaxation factor, and that the relaxation factor also influences the convergence rate. The proposed inexact Noda iterations are structure preserving and maintain the positivity of approximate eigenvectors. Numerical examples are provided to illustrate that the proposed inexact Noda iterations are practical, and they always preserve the positivity of approximate eigenvectors.
\end{abstract}

\keywords{Inexact Noda iteration, Modified inexact Noda iteration, $M$-matrix, non-negative matrix, monotone matrix, smallest eigenpair,
singular value, Perron vector, Perron root.}
\maketitle

\runningheads{C.-S.~Liu}{INEXACT NODA ITERATION}

\address{Department of Applied Mathematics, National Chiao Tung University, Taiwan.}

\corraddr{Ching-Sung Liu, Department of Applied Mathematics, National Chiao Tung University, Taiwan.
E-mail: chingsungliu@nctu.edu.tw}

%\footnotetext[2]{E-mail: cherung@cs.nthu.edu.tw}

\vspace{-6pt}

\section{Introduction}

Monotone matrices arise in many areas of mathematics, such as stability
analysis \cite{OLES83}, and bounds for eigenvalues and singular values \cite%
{Axels90,Axels03}. In many applications, one is interested in finding the
smallest eigenvalue $\lambda $ and the associated eigenvector $\mathbf{x}$
of an irreducible nonsingular monotone matrix $A\in \mathbb{R}^{n\times n}$.
The smallest eigenvalue $\lambda$ of a monotone matrix $A$ is defined as $%
\sigma _{\min }(A)=\min\{| \lambda | \mid \lambda \in \sigma(A) \}$, where $%
\sigma (A)$ denotes the set of eigenvalues of $A$. In \cite{Sch78,Sivak09},
a real matrix $A$ is called monotone if and only if $A^{-1}$ is a
non-negative matrix. The irreducible nonsingular $M$-matrices are one of the most important classes of matrices for applications such as discretized PDEs, Markov chains \cite{Abate94} and electric circuits \cite%
{Shi96}, and they have been studied extensively in the literature \cite[Chapter 6]%
{BPl94}.\ It is well known that there exist some monotone matrices that are not $M$-matrices,
such as matrices that can be written as a product of $M$-matrices.

There are some differences between an $M$-matrix and a monotone matrix. For
example, an $M$-matrix can be expressed in the form $\sigma I-B$ with a
non-negative matrix $B$ and some constant $\sigma >\rho (B)$, where $\rho
(\cdot )$ denotes the spectral radius, see \cite{BPl94}. Thus, the smallest
eigenvalue $\lambda $ of an irreducible nonsingular $M$-matrix $A$ is equal
to $\sigma -\rho (B)>0$. In contrast, the smallest eigenvalue of a monotone
matrix $A$ can only be expressed as $\sigma _{\min }(A) = \rho(A^{-1})^{-1}$.
However, the smallest eigenvalue retains the same properties \cite[p.~487]{HJo85}, that is,
the largest eigenvalue of an irreducible non-negative matrix $A^{-1}$ is the
Perron root, which is simple and equal to the spectral radius of $A^{-1}$
with a positive associated eigenvector.

For the computation of the Perron vector of a non-negative matrix $B$,
many methods exist \cite{Par98,Saa92,Ste01,Ost59b,Jia11,Jia12,LLL97,BGS06,SVo96,Lee07}
but the power methods are not structure preserving and cannot
guarantee the desired positivity of approximations when the Perron vector $%
\mathbf{x}$ has very small components. Therefore, a central concern is
how to preserve strict positivity of approximations to the Perron vector.
In $1971$, Noda introduced an inverse iteration method with shifted Rayleigh quotient-like approximations \cite{Nod71}
for non-negative matrix eigenvalue problems. This iteration method is called Noda iteration (NI), and it has also been adapted to the computation of the smallest eigenvalue and the eigenvector of an irreducible nonsingular $M$-matrix \cite%
{Xue96,AXY02}. The major advantages of Noda iteration are structure
preservation and global convergence. More precisely, it generates a
monotonically decreasing sequence of approximate eigenvalues that is
guaranteed to converge to $\rho (B)$, and maintains the positivity of
approximate eigenvectors. Furthermore, the convergence has been proven to be
superlinear \cite{Nod71} and asymptotically quadratic \cite{Els76}. In \cite%
{Jia14}, the authors introduced two inexact strategies for Noda
iteration, which are called inexact Noda iteration (INI) to find the Perron
vector of a non-negative matrix (or $M$-matrix). The proposed INI
algorithms are practical, and they always preserve the positivity of
approximate eigenvectors. Moreover, the convergence of INI with these
two strategies is globally linear and superlinear with convergence order $\frac{1+\sqrt{5}}{2}$, respectively.

In this paper, we propose an inexact Noda iteration (INI) to find the
smallest eigenvalue and the associated eigenvector of an irreducible
monotone matrix $A$. The major contribution of this paper is to provide two
main relaxation steps for computing the smallest eigenvalue $\lambda$ and
the associated eigenvector $\mathbf{x}$, respectively. The first step is to
use $O(\gamma_{k} \min (\mathbf{x}_{k}))$ as a stopping criterion for inner
iterations, with $0< \gamma_{k} <1 $, where $\mathbf{x}_k$ is the current
positive approximate eigenvector. The second step is to update the
approximate eigenvalues by using the recurrence relations $\overline{\lambda
}_{k+1}=\overline{\lambda }_{k}-\left( 1-\gamma _{k}\right) \min \left(
\frac{\mathbf{x}_{k}}{\mathbf{y}_{k+1}}\right)$, where $y_{k+1}$ is the next
normalized positive approximate eigenvector, so resulting INI
algorithms are structure preserving and globally convergent. The above
parameter $\gamma_k$ is called the \textquotedblleft relaxation
factor\textquotedblright. We then establish a rigorous convergence theory of
INI with two different relaxation factors $\gamma _{k}$, and prove that
the convergence of the resulting INI algorithms is globally linear, and
superlinear with the relaxation factor $\gamma _{k}$ as the convergence
rate, respectively.

In fact, the inner iterations of INI (or NI) require the solution of ill-conditioned linear systems when the sequence of approximate eigenvalues converges to $\rho(A^{-1})$ (or $\rho(B)$). In order to reduce the condition number of the inner linear system, we propose a modified Noda iteration (MNI) by using rank one update for the inner iterations, and we show that MNI and NI are mathematically equivalent. For monotone matrix eigenvalue problems, we also develop an integrated algorithm that combines INI with MNI, and we call this modified inexact Noda iteration (MINI). This hybrid iterative method can significantly improve the condition number of inner linear systems of INI.

The paper is organized as follows. In Section 2, we introduce the Noda iteration and some preliminaries. Section 3 contains the new strategy for inexact Noda iteration, and proves some basic properties for it. In Section 4, we establish its convergence theory, and derive the asymptotic convergence factor precisely. In Section 5, we present the integrated algorithm that combines INI with MNI. Finally, in Section 6 we present some numerical examples illustrating the convergence theory and the effectiveness of INI, and we make some concluding remarks in Section 7.

\section{Preliminaries and Notation}

For any real matrix $B=\left[ b_{ij}\right] \in \mathbb{R}^{n\times n}$, we
write $B \geq 0$ $(>0)$ if $b_{ij} \geq 0$ $(>0)$ for all $1\leq i,j\leq n$.
We define $|B|=[|b_{ij}|]$. If $B \geq 0$, we say $B$ is a non-negative
matrix, and if $B > 0$, we say $B$ is a positive matrix. For real matrices $%
B $ and $C$ of the same size, if $B-C$ is a non-negative matrix, we write $%
B\geq C $. A non-negative (positive) vector is similarly defined. A
non-negative matrix $B$ is said to be reducible if it can be placed into
block upper-triangular form by simultaneous row/column permutations;
otherwise it is irreducible. If $\mu $ is not an eigenvalue of $B$, the
function $\mathrm{sep}(\mu ,B)$ is defined as
\begin{equation}
\mathrm{sep}(\mu ,B)=\Vert (\mu I-B)^{-1}\Vert ^{-1}.  \label{eq:sep}
\end{equation}
$\angle (\mathbf{w},\mathbf{z})$ denotes the acute angle of any two nonzero
vectors $\mathbf{w}$ and $\mathbf{z}$. Throughout the paper, we use a $2$%
-norm for vectors and matrices, and the superscript $T$ denotes its
transpose.

We review some fundamental properties of non-negative matrices, monotone
matrices and $M$-matrices.

\begin{definition}
A matrix $A$ is said to be \textquotedblleft monotone\textquotedblright\ if $%
Ax\geq 0$ implies $x\geq 0$ for any positive vector. %(The inequality is
%understood to be entry-wise.)
\end{definition}

Another characterization of monotone matrices is given by the following well
known theorem.

\begin{theorem}[\protect\cite{coll60}]
\label{thm:mono} $A$ is monotone if and only if $A$ is non-singular and $%
A^{-1}\geq 0.$
\end{theorem}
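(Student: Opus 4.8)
The plan is to prove the two implications of the equivalence separately, exploiting the fact that the monotonicity hypothesis may be applied to \emph{any} real vector, not merely to a distinguished one.

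For the easier direction, suppose $A$ is non-singular with $A^{-1}\geq 0$, and let $\mathbf{x}$ be any vector with $A\mathbf{x}\geq 0$. Setting $\mathbf{y}=A\mathbf{x}\geq 0$ and writing $\mathbf{x}=A^{-1}\mathbf{y}$, the product of the non-negative matrix $A^{-1}$ with the non-negative vector $\mathbf{y}$ is again non-negative, so $\mathbf{x}\geq 0$; hence $A$ is monotone. This step is essentially a one-line computation.

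For the converse, suppose $A$ is monotone. First I would establish non-singularity, since $A^{-1}$ is not available a priori. If $A\mathbf{x}=0$, then both $A\mathbf{x}\geq 0$ and $A(-\mathbf{x})\geq 0$ hold, so monotonicity yields $\mathbf{x}\geq 0$ and $-\mathbf{x}\geq 0$ simultaneously, forcing $\mathbf{x}=0$. Thus the null space is trivial and $A$ is invertible. With $A^{-1}$ now in hand, I would show its non-negativity column by column: for each standard basis vector $\mathbf{e}_j\geq 0$, the $j$-th column $\mathbf{x}_j=A^{-1}\mathbf{e}_j$ satisfies $A\mathbf{x}_j=\mathbf{e}_j\geq 0$, whence monotonicity gives $\mathbf{x}_j\geq 0$. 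Since every column is non-negative, $A^{-1}\geq 0$.

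The argument is elementary and I anticipate no serious obstacle; the only point requiring care is the ordering of the converse, namely that non-singularity must be secured before one can speak of $A^{-1}$ at all, and that it is the full strength of the definition---its validity for $-\mathbf{x}$ and for each basis vector $\mathbf{e}_j$---that drives both conclusions.
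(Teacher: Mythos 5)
Your proof is correct and complete. Note that the paper itself offers no proof of this theorem---it is quoted directly from Collatz \cite{coll60}---so there is no internal argument to compare against; yours is the standard one, and both directions are handled properly: the forward direction by writing $\mathbf{x}=A^{-1}(A\mathbf{x})$ as a product of a non-negative matrix and a non-negative vector, and the converse by first establishing a trivial null space (applying monotonicity to both $\mathbf{x}$ and $-\mathbf{x}$) and then testing $A^{-1}$ column by column against the standard basis vectors.

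One point worth making explicit: your argument necessarily uses the definition of monotonicity quantified over \emph{all} real vectors, whereas the paper's Definition~1 is phrased ``$A\mathbf{x}\geq 0$ implies $\mathbf{x}\geq 0$ for any positive vector,'' which taken literally makes the hypothesis vacuous (a positive vector is already non-negative, so every matrix would qualify) and the theorem false. Your reading is the correct one---indeed, applying the hypothesis to $-\mathbf{x}$ and to the columns $A^{-1}\mathbf{e}_j$, neither of which is known in advance to be positive, is precisely where the full strength of the definition is needed---but it deserves a sentence acknowledging that the paper's stated definition must be read in this standard way for the equivalence to hold.
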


\begin{definition}
A monotone matrix $M\ $is an $M$-matrix if $M=\left( m_{ij}\right) $, $%
m_{ij}\leq 0$ for $i\neq j.$
\end{definition}

\begin{lemma}[\protect\cite{BPl94}]
\label{thm:M} Let $M$ is a nonsingular $M$-matrix. Then the following
statements are equivalent:

\begin{enumerate}
\item $M= \left(a_{ij}\right)$, $a_{ij}\le 0$ for $i\neq j$, and $M^{-1}\ge
0 $;

\item $M=\sigma I-B$ with some $B\ge 0$ and $\sigma > \rho(B)$.
\end{enumerate}
\end{lemma}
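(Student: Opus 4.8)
The plan is to establish the two implications $(2)\Rightarrow(1)$ and $(1)\Rightarrow(2)$ separately, drawing on a Neumann series expansion for one direction and on Perron--Frobenius theory for the other.

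For $(2)\Rightarrow(1)$, I would assume $M=\sigma I-B$ with $B\ge 0$ and $\sigma>\rho(B)$. The off-diagonal sign condition is then immediate, since for $i\neq j$ the $(i,j)$ entry of $M$ equals $-b_{ij}\le 0$. To obtain $M^{-1}\ge 0$, I would factor $M=\sigma(I-\sigma^{-1}B)$ and observe that $\rho(\sigma^{-1}B)=\rho(B)/\sigma<1$, so the Neumann series $\sum_{k\ge 0}(\sigma^{-1}B)^{k}$ converges to $(I-\sigma^{-1}B)^{-1}$. Each term of this series is non-negative because $B\ge 0$ and $\sigma>0$, hence the sum is non-negative, and therefore $M^{-1}=\sigma^{-1}\sum_{k\ge 0}(\sigma^{-1}B)^{k}\ge 0$.

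For $(1)\Rightarrow(2)$, I would start from $a_{ij}\le 0$ for $i\neq j$ together with $M^{-1}\ge 0$, choose any $\sigma\ge\max_{i}a_{ii}$, and set $B:=\sigma I-M$. The off-diagonal entries of $B$ are $-a_{ij}\ge 0$ and the diagonal entries are $\sigma-a_{ii}\ge 0$, so $B\ge 0$ and $M=\sigma I-B$ hold by construction; it remains only to verify the strict inequality $\sigma>\rho(B)$.

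This last step is the main obstacle, and I would handle it as follows. By the Perron--Frobenius theorem there is a vector $\bv\ge 0$ with $\bv\neq\zero$ and $B\bv=\rho(B)\bv$, which gives $M\bv=(\sigma-\rho(B))\bv$. Since $M$ is nonsingular, $\sigma-\rho(B)\neq 0$, so it suffices to exclude the case $\sigma-\rho(B)<0$. Rewriting the eigenrelation as $M^{-1}\bv=(\sigma-\rho(B))^{-1}\bv$ and choosing an index $i$ with $v_{i}>0$, the hypothesis $M^{-1}\ge 0$ forces $(M^{-1}\bv)_{i}\ge 0$, whereas $(\sigma-\rho(B))^{-1}v_{i}<0$ whenever $\sigma<\rho(B)$, a contradiction. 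Hence $\sigma>\rho(B)$, which completes the equivalence.
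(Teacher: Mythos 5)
Your proof is correct, but there is nothing in the paper to compare it against: the lemma is quoted from \cite{BPl94} (Berman and Plemmons) and the paper supplies no proof of its own, so your argument stands or falls on its own merits --- and it stands. For $(2)\Rightarrow(1)$, the Neumann-series argument is the standard one; the only hidden point, which your setup covers, is that $\sigma>\rho(B)\ge 0$ forces $\sigma>0$, so the factorization $M=\sigma(I-\sigma^{-1}B)$ and the division by $\sigma$ are legitimate. For $(1)\Rightarrow(2)$, your appeal to Perron--Frobenius is correctly calibrated: since $B$ need not be irreducible, the strong form (strictly positive, simple Perron vector) is unavailable, but you invoke only the weak form --- every non-negative matrix has $\rho(B)$ as an eigenvalue with a non-negative eigenvector $\bv\neq\zero$ --- and that is all your contradiction requires, since it suffices to have a single index $i$ with $v_i>0$. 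Nonsingularity of $M$, which is part of the hypothesis, correctly excludes $\sigma=\rho(B)$, and inverse-nonnegativity excludes $\sigma<\rho(B)$, so the chosen $\sigma\ge\max_i a_{ii}$ indeed satisfies $\sigma>\rho(B)$. One remark on the statement rather than the proof: under the paper's own definitions (an $M$-matrix is a monotone matrix with non-positive off-diagonal entries, and monotone means $M^{-1}\ge 0$), item (1) is essentially a restatement of the hypothesis, so the genuine content is exactly the equivalence $(1)\Leftrightarrow(2)$ that you proved; this is essentially the argument found in \cite{BPl94}.
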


For a pair of positive vectors $\mathbf{v}$ and $\mathbf{w}$, define
\begin{equation*}
\max \left(\frac{\mathbf{w}}{\mathbf{v}}\right) = \underset{i}{\max}\left(%
\frac{\mathbf{w}^{(i)}}{\mathbf{v}^{(i)}}\right),\text{ \ }\min\left(\frac{%
\mathbf{w}}{\mathbf{v}}\right) = \underset{i}{\min}\left(\frac{\mathbf{w}%
^{(i)}} {\mathbf{v}^{(i)}}\right),
\end{equation*}
where $\mathbf{v}=[\mathbf{v}^{(1)},\mathbf{v}^{(2)},\ldots,\mathbf{v}%
^{(n)}]^T$ and $\mathbf{w}=[\mathbf{w}^{(1)},\mathbf{w}^{(2)},\ldots,\mathbf{%
w}^{(n)}]^T$. The following lemma gives bounds for the spectral radius of a
non-negative matrix $B$.

\begin{lemma}[{{\protect\cite[p.~493]{HJo85}}}]
\label{maxmin}Let $B\ $be an irreducible non-negative matrix. If $\mathbf{v}>%
\mathbf{0}$ is not an eigenvector of $B$, then
\begin{equation}
\min \left( \frac{B\mathbf{v}}{\mathbf{v}}\right) <\rho (B)<\max \left(
\frac{B\mathbf{v}}{\mathbf{v}}\right) .  \label{eq:maxmin}
\end{equation}
\end{lemma}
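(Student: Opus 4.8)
The plan is to exploit the Perron--Frobenius theory for the irreducible non-negative matrix $B$, and in particular the existence of a strictly positive \emph{left} Perron vector. Write $\rho=\rho(B)$ and set
\begin{equation*}
\alpha=\min\left(\frac{B\mathbf{v}}{\mathbf{v}}\right),\qquad \beta=\max\left(\frac{B\mathbf{v}}{\mathbf{v}}\right).
\end{equation*}
By the very definition of these componentwise quotients, the scalars $\alpha$ and $\beta$ are chosen precisely so that the vector inequalities $\alpha\mathbf{v}\le B\mathbf{v}\le\beta\mathbf{v}$ hold componentwise, with equality attained in at least one coordinate for each of the two bounds. The goal then reduces to showing the strict inequalities $\alpha<\rho<\beta$.

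First I would invoke the Perron--Frobenius theorem for irreducible non-negative matrices, exactly as recalled in the discussion preceding the statement: $\rho$ is an eigenvalue of $B$ (hence of $B^{T}$), and there exists a left Perron vector $\mathbf{y}>\mathbf{0}$ with $\mathbf{y}^{T}B=\rho\,\mathbf{y}^{T}$. This is the single external ingredient the argument depends on, and I would simply cite it. Since $\mathbf{v}>\mathbf{0}$ and $\mathbf{y}>\mathbf{0}$, the scalar $\mathbf{y}^{T}\mathbf{v}$ is strictly positive, which is what permits dividing by it at the end of the argument.

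Next I would use the hypothesis that $\mathbf{v}$ is \emph{not} an eigenvector of $B$ to upgrade the weak inequalities to strict ones. Because $\mathbf{v}$ is not an eigenvector, the non-negative vector $B\mathbf{v}-\alpha\mathbf{v}$ cannot be the zero vector (otherwise $\mathbf{v}$ would be an eigenvector for the eigenvalue $\alpha$), and likewise $\beta\mathbf{v}-B\mathbf{v}$ is a nonzero non-negative vector. Pairing each of these with the strictly positive vector $\mathbf{y}$ therefore produces strict inequalities,
\begin{equation*}
\mathbf{y}^{T}(B\mathbf{v}-\alpha\mathbf{v})>0,\qquad \mathbf{y}^{T}(\beta\mathbf{v}-B\mathbf{v})>0,
\end{equation*}
and substituting $\mathbf{y}^{T}B\mathbf{v}=\rho\,\mathbf{y}^{T}\mathbf{v}$ in both, then dividing through by $\mathbf{y}^{T}\mathbf{v}>0$, yields $\alpha<\rho$ and $\rho<\beta$, which is exactly \eqref{eq:maxmin}.

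I do not anticipate a genuine obstacle here: once the positive left eigenvector is in hand, the proof is a one-line pairing (a Collatz--Wielandt type argument). The only point requiring care is to make sure the ``not an eigenvector'' assumption is deployed correctly to convert the componentwise bounds $\alpha\mathbf{v}\le B\mathbf{v}\le\beta\mathbf{v}$ into strict scalar inequalities after testing against $\mathbf{y}$; a nonzero non-negative vector tested against a strictly positive vector gives a strictly positive value, and that is what forces strictness. The main conceptual dependence is the Perron--Frobenius guarantee of a strictly positive left eigenvector, while everything else is elementary.
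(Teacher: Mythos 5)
Your proof is correct: the existence of a strictly positive left Perron vector $\mathbf{y}$ for an irreducible non-negative $B$ is legitimate to cite, the vectors $B\mathbf{v}-\alpha\mathbf{v}$ and $\beta\mathbf{v}-B\mathbf{v}$ are indeed nonzero and non-negative precisely because $\mathbf{v}$ is not an eigenvector, and pairing them with $\mathbf{y}$ and using $\mathbf{y}^{T}B\mathbf{v}=\rho\,\mathbf{y}^{T}\mathbf{v}$ gives the strict inequalities. The paper offers no proof of this lemma at all (it is quoted from Horn and Johnson, p.~493), and your left-eigenvector pairing is essentially the standard Collatz--Wielandt argument given in that reference, so there is nothing to reconcile.
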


\subsection{The Noda iteration}

The Noda iteration \cite{Nod71} is an inverse iteration shifted by a
Rayleigh quotient-like approximation of the Perron root of an irreducible
non-negative matrix $B$.

Given an initial vector $\mathbf{x}_{0}>\mathbf{0}$ with $\Vert \mathbf{x}%
_{0}\Vert =1$, the Noda iteration (NI) consists of three steps:
\begin{align}
(\widehat{\lambda }_{k}I-B)\,\mathbf{y}_{k+1}& =\mathbf{x}_{k},
\label{eq:step1} \\
\mathbf{x}_{k+1}& =\mathbf{y}_{k+1}\,/\,\Vert \mathbf{y}_{k+1}\Vert ,
\label{eq:step2} \\
\widehat{\lambda }_{k+1}& =\max \left( \frac{B\mathbf{x}_{k+1}}{\mathbf{x}%
_{k+1}}\right) .  \label{eq:step3}
\end{align}

The main task is to compute a new approximation $\mathbf{x}_{k+1}$ to $%
\mathbf{x}$ by solving the inner linear system (\ref{eq:step1}). From Lemma~%
\ref{maxmin}, we know that $\widehat{\lambda }_{k}>$ $\rho (B)$ if $\mathbf{x%
}_{k}$ is not a scalar multiple of the eigenvector $\mathbf{x}$.
This result shows that $\widehat{\lambda }_{k}I-B$ is an irreducible nonsingular $M$-matrix,
and its inverse is an irreducible non-negative matrix. Therefore,
we have $\mathbf{y}_{k+1}>\mathbf{0}$ and $\mathbf{x}_{k+1}>\mathbf{0}$, i.e., $\mathbf{x}_{k+1}$
is always a positive vector if $\mathbf{x}_{k}$ is. After variable transformation,
we get $\widehat{\lambda }_{k+1}$ from the following relation:
\begin{equation*}
\widehat{\lambda }_{k+1}=\widehat{\lambda }_{k}-\min \left( \frac{\mathbf{x}%
_{k}}{\mathbf{y}_{k+1}}\right) ,
\end{equation*}%
so $\{\widehat{\lambda }_{k}\}$ is monotonically decreasing.

\subsection{The inexact Noda iteration}

\label{sec:INVITTHM} The inexact Noda iteration. Based on the Noda iteration,
in \cite{Jia14} the authors propose an inexact Noda iteration (INI) for the computation of the spectral radius of a non-negative irreducible matrix $B$. In this paper, since $A$ is a monotone matrix, i.e.,
$A^{-1}$ is a non-negative matrix, we replace $B$ by $A^{-1}$ in (\ref{eq:step1}), i.e.,
\begin{equation}
(\widehat{\lambda }_{k}I-A^{-1})\,\mathbf{y}_{k+1}= \mathbf{x}_{k}.
\label{eq: exrelation}
\end{equation}
When $A$ is large and sparse, we see that we must resort to an iterative linear solver to get an approximate solution. In order to reduce the computational cost of (\ref{eq: exrelation}), we solve $\mathbf{y}_{k+1}$ in (\ref{eq: exrelation}) by inexactly satisfying
\begin{equation}
(\widehat{\lambda }_{k}I-A^{-1})\,\mathbf{y}_{k+1}=\mathbf{x}_{k}+A^{-1}%
\mathbf{f}_{k},  \label{eq: ine relation}
\end{equation}%
which is equivalent to
\begin{align}
(\widehat{\lambda }_{k}A-I)\,\mathbf{y}_{k+1}& =A\mathbf{x}_{k}+\mathbf{f}%
_{k},\text{{}}  \label{eq:inexactsys} \\
\mathbf{x}_{k+1}& =\mathbf{y}_{k+1}/\Vert \mathbf{y}_{k+1}\Vert ,
\label{eq:normal}
\end{align}%
where $\mathbf{f}_{k}$ is the residual vector between $(\widehat{\lambda }%
_{k}A-I)\,\mathbf{y}_{k+1}$ and A$\mathbf{x}_{k}$. Here, the residual norm
(inner tolerance) $\xi _{k}:=\Vert \mathbf{f}_{k}\Vert $ can be changed at
each iterative step $k$.\noindent

%In \cite{Jia14}, authors present two practical inner tolerance strategies for INI algorithm.
%\begin{itemize}
%\item INI\_1: the residual norm satisfies $\xi_k = \Vert \mathbf{f}_k\Vert
%\le \gamma \min(\mathbf{x}_k)$ for some fixed constant $0<\gamma < 1$;
%
%\item INI\_2: the residual vector satisfies $|\mathbf{f}_k| \le d_k\mathbf{x}%
%_k$ with $d_k=1-\overline{\lambda}_k/\overline{\lambda}_{k-1}$ for $k\ge 1$
%and $\Vert \mathbf{f}_0\Vert \le \gamma \min(\mathbf{x}_0)$ for some
%constant $0<\gamma < 1$.
%\end{itemize}

\begin{theorem}[\protect\cite{Jia14}]
\label{monotone}Let $A$ be an irreducible monotone matrix and $0\leq \gamma
<1$ be a fixed constant. For the unit length $\mathbf{x}_{k}>\mathbf{0}$, if
$\mathbf{x}_{k}\not=\mathbf{x}$ and $\mathbf{f}_{k}$ in (\ref{eq:inexactsys}%
) satisfies
\begin{equation}
\left\Vert A^{-1}\mathbf{f}_{k}\right\Vert \leq \gamma \min \left( \,\mathbf{%
x}_{k}\right) ,  \label{inexact condi}
\end{equation}%
then $\{\widehat{\lambda }_{k}\}$ is monotonically decreasing and $%
\lim_{k\rightarrow \infty }\widehat{\lambda }_{k}=\rho (A^{-1})$. Moreover,
the convergence of INI is at least globally linear.
\end{theorem}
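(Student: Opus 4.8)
The plan is to handle the three assertions---positivity-preserving monotone decrease, convergence to $\rho(A^{-1})$, and the global linear rate---in that order, reducing the last two to a single uniform positivity estimate on the iterates. First I would show that the inexactness condition keeps everything positive. Since $\mathbf{x}_k$ is not the Perron eigenvector, Lemma~\ref{maxmin} gives $\widehat{\lambda}_k > \rho(A^{-1})$, so $\widehat{\lambda}_k I - A^{-1}$ is an irreducible nonsingular $M$-matrix with entrywise positive inverse. The bound $\|A^{-1}\mathbf{f}_k\| \le \gamma\min(\mathbf{x}_k)$ with $\gamma < 1$ forces each component of the right-hand side $\mathbf{w}_k := \mathbf{x}_k + A^{-1}\mathbf{f}_k$ to satisfy $(\mathbf{w}_k)^{(i)} \ge (1-\gamma)\min(\mathbf{x}_k) > 0$, whence $\mathbf{y}_{k+1} = (\widehat{\lambda}_k I - A^{-1})^{-1}\mathbf{w}_k > \mathbf{0}$ and $\mathbf{x}_{k+1} > \mathbf{0}$. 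Rewriting $A^{-1}\mathbf{y}_{k+1} = \widehat{\lambda}_k\mathbf{y}_{k+1} - \mathbf{w}_k$ and taking the maximum of the ratio $A^{-1}\mathbf{y}_{k+1}/\mathbf{y}_{k+1}$ yields the key recurrence $\widehat{\lambda}_{k+1} = \widehat{\lambda}_k - \min(\mathbf{w}_k/\mathbf{y}_{k+1})$; positivity of $\mathbf{w}_k$ and $\mathbf{y}_{k+1}$ makes the subtracted term strictly positive, so $\{\widehat{\lambda}_k\}$ strictly decreases, while Lemma~\ref{maxmin} keeps it above $\rho(A^{-1})$. A monotone bounded sequence converges, say to $\widehat{\lambda}_{\ast} \ge \rho(A^{-1})$.

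Next I would establish $\widehat{\lambda}_{\ast} = \rho(A^{-1})$ together with the linear rate in one stroke, using a fixed positive left Perron vector $\mathbf{z}$ of $A^{-1}$ (so $\mathbf{z}^T A^{-1} = \rho(A^{-1})\mathbf{z}^T$). Applying $\mathbf{z}^T$ to the inexact system gives $(\widehat{\lambda}_k - \rho(A^{-1}))\,\mathbf{z}^T\mathbf{y}_{k+1} = \mathbf{z}^T\mathbf{w}_k$. If $j$ is the index attaining the minimum in the recurrence, then bounding $(\mathbf{y}_{k+1})^{(j)} \le \mathbf{z}^T\mathbf{y}_{k+1}/\min(\mathbf{z})$ and $(\mathbf{w}_k)^{(j)} \ge (1-\gamma)\min(\mathbf{x}_k)$ yields $\widehat{\lambda}_k - \widehat{\lambda}_{k+1} \ge \beta\,\min(\mathbf{x}_k)\,(\widehat{\lambda}_k - \rho(A^{-1}))$ for an explicit constant $\beta > 0$ built from $\min(\mathbf{z})$, $\|\mathbf{z}\|$ and $\gamma$. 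Provided $\min(\mathbf{x}_k)$ is bounded below by a fixed $m_0 > 0$, this rearranges to $\widehat{\lambda}_{k+1} - \rho(A^{-1}) \le (1 - \beta m_0)(\widehat{\lambda}_k - \rho(A^{-1}))$, and since $\widehat{\lambda}_{k+1} \ge \rho(A^{-1})$ forces $\beta m_0 \le 1$, the contraction factor lies in $[0,1)$; this delivers global linear convergence and in particular $\widehat{\lambda}_{\ast} = \rho(A^{-1})$.

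The crux---and the step I expect to be the main obstacle---is the uniform lower bound $\min(\mathbf{x}_k) \ge m_0 > 0$, i.e.\ that the iterates do not drift toward the boundary of the positive cone. I would obtain it from a Hilbert-metric / Birkhoff-type estimate for the positive matrix $M(\mu) := (\mu I - A^{-1})^{-1}$: for any $\mathbf{w} > \mathbf{0}$ one has $\min(M(\mu)\mathbf{w})/\max(M(\mu)\mathbf{w}) \ge \min_{i,j}M(\mu)_{ij}/\max_{i,j}M(\mu)_{ij}$, so it suffices to bound this entrywise ratio below uniformly for $\mu \in (\rho(A^{-1}),\widehat{\lambda}_0]$. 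The only delicate point is the limit $\mu \to \rho(A^{-1})^{+}$, where $M(\mu)$ blows up; here the spectral decomposition $(\mu - \rho(A^{-1}))M(\mu) = \mathbf{x}\mathbf{z}^T/(\mathbf{z}^T\mathbf{x}) + (\mu - \rho(A^{-1}))R(\mu)$ with bounded reduced resolvent $R(\mu)$ shows the normalized matrix tends to the strictly positive rank-one projector $\mathbf{x}\mathbf{z}^T/(\mathbf{z}^T\mathbf{x})$, so the entrywise ratio converges to $\min_{i,j}(\mathbf{x}\mathbf{z}^T)_{ij}/\max_{i,j}(\mathbf{x}\mathbf{z}^T)_{ij} > 0$.

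Continuity on the compact closure then gives a uniform $\tau > 0$, and applying the ratio bound to $\mathbf{y}_{k+1} = M(\widehat{\lambda}_k)\mathbf{w}_k$ yields $\min(\mathbf{x}_{k+1})/\max(\mathbf{x}_{k+1}) \ge \tau$; since $\|\mathbf{x}_{k+1}\| = 1$ forces $\max(\mathbf{x}_{k+1}) \ge 1/\sqrt{n}$, we obtain $\min(\mathbf{x}_k) \ge \tau/\sqrt{n} =: m_0$ for all $k \ge 1$, which closes the loop with the previous paragraph. I would note in passing that this argument proves convergence and the linear rate simultaneously, so no separate subsequence/compactness contradiction is needed to rule out $\widehat{\lambda}_{\ast} > \rho(A^{-1})$.
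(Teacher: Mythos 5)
Your proof is correct, and it takes a genuinely different route from the one used in this paper (note the paper never proves Theorem~\ref{monotone} inline --- it is quoted from \cite{Jia14} --- but its own machinery for the analogous results is Lemma~\ref{monotone2}, Lemma~\ref{opposite}, Theorem~\ref{main} and Theorem~\ref{linearconv}). Your first paragraph coincides with the paper's Lemma~\ref{monotone2}: the residual bound gives $(1-\gamma)\mathbf{x}_k\le \mathbf{x}_k+A^{-1}\mathbf{f}_k\le(1+\gamma)\mathbf{x}_k$, positivity follows from the $M$-matrix property of $\widehat{\lambda}_kI-A^{-1}$, and the recurrence $\widehat{\lambda}_{k+1}=\widehat{\lambda}_k-\min\bigl((\mathbf{x}_k+A^{-1}\mathbf{f}_k)/\mathbf{y}_{k+1}\bigr)$ yields strict decrease. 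After that you diverge completely: the paper proves convergence by contradiction, showing (Lemma~\ref{opposite}) that a limit $\alpha>\rho(A^{-1})$ would force $\min(\mathbf{x}_k)\to0$ while $\sin\angle(\mathbf{x}_k,\mathbf{x})\ge m>0$, and then using the orthogonal decomposition (\ref{eqn: partition}) and the $\tan\varphi_k$ recurrence (\ref{eq:tan}), calibrated against the exact-NI factor $\beta_k$ via Elsner's quadratic-convergence theorem, to reach $\tan\varphi_k\to 0$, a contradiction; the linear factor is derived separately in Theorem~\ref{linearconv}. You instead (i) hit the inexact system with a left Perron vector $\mathbf{z}$ to get the one-step contraction $\varepsilon_{k+1}\le(1-\beta\min(\mathbf{x}_k))\varepsilon_k$ with explicit $\beta=\frac{(1-\gamma)\min(\mathbf{z})}{(1+\gamma)\Vert\mathbf{z}\Vert}$, and (ii) establish the uniform bound $\min(\mathbf{x}_k)\ge m_0>0$ by the entrywise-ratio (Birkhoff-type) estimate on $(\mu I-A^{-1})^{-1}$, which is scale invariant and therefore survives the resolvent blow-up at $\mu=\rho(A^{-1})$ through the rank-one projector limit. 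I checked the individual estimates --- $(\mathbf{w}_k)^{(j)}\ge(1-\gamma)\min(\mathbf{x}_k)$, $(\mathbf{y}_{k+1})^{(j)}\le\mathbf{z}^T\mathbf{y}_{k+1}/\min(\mathbf{z})$, $\mathbf{z}^T\mathbf{w}_k\le(1+\gamma)\Vert\mathbf{z}\Vert$, and $\min(M\mathbf{w})/\max(M\mathbf{w})\ge\min_{i,j}M_{ij}/\max_{i,j}M_{ij}$ for entrywise positive $M$ --- and they all hold, with no circularity since the ratio bound needs only $\widehat{\lambda}_k\in(\rho(A^{-1}),\widehat{\lambda}_0]$, which your first paragraph supplies. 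As for what each approach buys: yours is more elementary and self-contained (no Elsner theorem, no angle decomposition), it proves convergence and the global linear rate in one stroke, and the uniform positivity bound $\min(\mathbf{x}_k)\ge m_0$ is strictly stronger information than the paper extracts --- it refutes the scenario of Lemma~\ref{opposite} directly rather than by contradiction. The paper's route, in exchange, produces the sharper and cleaner asymptotic factor $2\gamma/(1+\gamma)$, which depends only on the relaxation parameter and underlies the superlinear/quadratic analysis of Theorem~\ref{quadconvini}; your factor $1-\beta m_0$ involves global constants ($\min(\mathbf{z})$, $\Vert\mathbf{z}\Vert$, $m_0$) and is typically far more pessimistic, and the angle-based analysis additionally tracks eigenvector convergence, which your argument does not address (though the theorem as stated does not require it).
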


\noindent Based on (\ref{eq:inexactsys})-(\ref{inexact condi}), we describe INI as Algorithm~\ref{alg:iiva}.

\begin{algorithm}
\begin{enumerate}
  \item   Given $\widehat{\lambda}_{0}$, $\bx_0 > \zero$ with $\Vert \bx_0\Vert =1$, $0\le \gamma < 1$ and ${\sf tol}>0$.
  \item   {\bf for} $k =0,1,2,\dots$
%  \item   \quad {\bf if} Resi $ > {\sf \sqrt{tol}}$
  \item   \quad Solve $(\widehat{\lambda}_{k}A-I)\,\mathbf{y}_{k+1}=A\mathbf{x}_{k}$
  inexactly such that the inner tolerance $\xi_{k}$ satisfies condition (\ref{inexact condi})
  \item   \quad Normalize the vector $\bx_{k+1}= \by_{k+1}/\Vert \by_{k+1}\Vert$.
  \item   \quad Compute $\widehat{\lambda}_{k+1}= \max \left(\frac{A^{-1}\bx_{k+1}}{\bx_{k+1}}\right)$.
  \item   {\bf until} convergence: Resi$= \Vert A\bx_{k+1}-\widehat{\lambda}_{k}^{-1}\bx_{k+1}\Vert <{\sf tol}$.
\end{enumerate}
\caption{Inexact Noda Iteration (INI)}
\label{alg:iiva}
\end{algorithm}

Using the relation (\ref{eq: ine relation}), step 5 in Algorithm~\ref%
{alg:iiva} can be rewritten as
\begin{equation*}
\widehat{\lambda }_{k+1}=\widehat{\lambda }_{k}-\min \left( \frac{\mathbf{x}%
_{k}+A^{-1}\mathbf{f}_k} {\mathbf{y}_{k+1}}\right).
\end{equation*}%
Unfortunately, $A^{-1}$ is not explicitly available; in other words,
we need to compute \textquotedblleft $A^{-1}\mathbf{f}_k$\textquotedblright exactly for the required approximate eigenvalue $\widehat{\lambda }_{k+1}$. Hence, in the next section, we propose a new strategy to estimate the approximate eigenvalues without increasing the computational cost. This strategy is practical and preserves the strictly decreasing property of the approximate eigenvalue sequence.

\section{The relaxation strategy for INI and some basic properties}

\label{sec:type1}

In order to ensure that INI is correctly implemented, we now propose two main relaxation steps to define Algorithm~\ref{alg:ini}:

\begin{itemize}
\item The residual norm satisfies
\begin{equation}
\xi _{k}=\Vert \mathbf{f}_{k}\Vert \leq \gamma _{k}\mathrm{sep}(0,A)\min (%
\mathbf{x}_{k}),  \label{eq:strategy}
\end{equation}%
where $0\leq \gamma _{k}\leq \gamma <1$ with a constant upper bound $\gamma
. $
\end{itemize}

\begin{itemize}
\item The update of the approximate eigenvalue satisfies%
\begin{equation}
\overline{\lambda }_{k+1}=\overline{\lambda }_{k}-\left( 1-\gamma
_{k}\right) \min \left( \frac{\mathbf{x}_{k}}{\mathbf{y}_{k+1}}\right) .
\label{eq:updateeig}
\end{equation}
\end{itemize}

\begin{algorithm}
\begin{enumerate}
  \item   Given $\overline{\lambda }_{0}$, $\bx_0 > \zero$ with $\Vert \bx_0\Vert =1$, $0\le \gamma < 1$ and ${\sf tol}>0$.
  \item   {\bf for} $k =0,1,2,\dots$
%  \item   \quad {\bf if} Resi $ > {\sf \sqrt{tol}}$
  \item   \quad Solve $(\overline{\lambda }_{k}A-I)\,\mathbf{y}_{k+1}=A\mathbf{x}_{k}$
  inexactly such that the inner tolerance $\xi_{k}$ satisfies condition (\ref{eq:strategy})
  \item   \quad Normalize the vector $\bx_{k+1}= \by_{k+1}/\Vert \by_{k+1}\Vert$.
  \item   \quad Compute $\overline{\lambda }_{k+1}$  that satisfies condition (\ref{eq:updateeig}).
  \item   {\bf until} convergence: $\Vert A\bx_{k+1}-\overline{\lambda }_{k}^{-1}\bx_{k+1}\Vert <{\sf tol}$.
\end{enumerate}
\caption{Inexact Noda Iteration for monotone matrices (INI)}
\label{alg:ini}
\end{algorithm}

In step 3 of Algorithm~\ref{alg:ini}, it leads to two equivalent inexact
relation satisfying
\begin{align}
(\overline{\lambda }_{k}I-A^{-1})\,\mathbf{y}_{k+1}& =\mathbf{x}_{k}+A^{-1}%
\mathbf{f}_{k}  \label{eq: ine relation2} \\
(\overline{\lambda }_{k}A-I)\,\mathbf{y}_{k+1}& =A\mathbf{x}_{k}+\mathbf{f}%
_{k},\text{{}}  \label{eq:inexactsys2}
\end{align}%
We remark that $\overline{\lambda }_{k+1}$ in (\ref{eq:updateeig}) is no
longer equal to $\max \left( \frac{A^{-1}\mathbf{x}_{k+1}}{\mathbf{x}_{k+1}}%
\right)$, and therefore that $\overline{\lambda }_{k+1}$ cannot be clearly
demonstrated to be greater than its lower bound $\rho (A^{-1})$. The following lemma ensures
that $\rho (A^{-1})$ is still the lower bound of $\overline{\lambda}_k$.

\begin{lemma}
\label{monotone2}Let $A$ be an irreducible monotone matrix. For the unit
length $\mathbf{x}_{k}\not=\mathbf{x}>\mathbf{0}$ and the relaxation factor $%
\gamma_k \in [0,1)$, if $\overline{\lambda }_{k}>\rho (A^{-1}) $, $\mathbf{f}%
_{k}$ in (\ref{eq:inexactsys2}) satisfies condition (\ref{eq:strategy}) and
the approximate eigenvalue satisfies (\ref{eq:updateeig}), then the new
approximation $\mathbf{x}_{k+1}>\mathbf{0}$ and the sequence $\left\{
\overline{\lambda }_{k}\right\} $ is monotonically decreasing and bounded
below by $\rho (A^{-1})$, i.e.,%
\begin{equation*}
\overline{\lambda }_{k}>\overline{\lambda }_{k+1}\geq \rho (A^{-1}).
\end{equation*}
\end{lemma}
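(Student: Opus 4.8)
The plan is to establish the two assertions separately: first positivity of the new iterate $\mathbf{x}_{k+1}$, then the chain $\overline{\lambda}_k > \overline{\lambda}_{k+1} \geq \rho(A^{-1})$. For the positivity, I would work from the inexact relation (\ref{eq: ine relation2}), writing $\mathbf{y}_{k+1} = (\overline{\lambda}_k I - A^{-1})^{-1}(\mathbf{x}_k + A^{-1}\mathbf{f}_k)$. Since $\overline{\lambda}_k > \rho(A^{-1})$ by hypothesis, Lemma~\ref{thm:M} tells us that $\overline{\lambda}_k I - A^{-1}$ is a nonsingular irreducible $M$-matrix, so its inverse is a nonnegative (in fact positive, by irreducibility) matrix. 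Thus it suffices to show that the right-hand side $\mathbf{x}_k + A^{-1}\mathbf{f}_k$ is positive. I would bound it componentwise: each component of $A^{-1}\mathbf{f}_k$ is at least $-\|A^{-1}\mathbf{f}_k\|$, and the stopping criterion (\ref{eq:strategy}) with the identity $\mathrm{sep}(0,A) = \|A^{-1}\|^{-1}$ gives $\|A^{-1}\mathbf{f}_k\| \leq \|A^{-1}\|\,\xi_k \leq \gamma_k \min(\mathbf{x}_k) < \min(\mathbf{x}_k)$, whence every component of $\mathbf{x}_k + A^{-1}\mathbf{f}_k$ is strictly positive. Applying the positive matrix $(\overline{\lambda}_k I - A^{-1})^{-1}$ yields $\mathbf{y}_{k+1} > \mathbf{0}$, and normalization preserves this.

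For the monotonicity and the lower bound I would analyze the defining relation (\ref{eq:updateeig}). The strictly decreasing part $\overline{\lambda}_{k+1} < \overline{\lambda}_k$ is immediate once we know $(1-\gamma_k)\min(\mathbf{x}_k/\mathbf{y}_{k+1}) > 0$: the factor $1-\gamma_k > 0$ since $\gamma_k \in [0,1)$, and $\min(\mathbf{x}_k/\mathbf{y}_{k+1}) > 0$ because both $\mathbf{x}_k$ and $\mathbf{y}_{k+1}$ are positive. The substantive claim is the lower bound $\overline{\lambda}_{k+1} \geq \rho(A^{-1})$. Here I would rewrite (\ref{eq: ine relation2}) as $\mathbf{x}_k = (\overline{\lambda}_k I - A^{-1})\mathbf{y}_{k+1} - A^{-1}\mathbf{f}_k$, i.e. $A^{-1}\mathbf{y}_{k+1} = \overline{\lambda}_k \mathbf{y}_{k+1} - \mathbf{x}_k - A^{-1}\mathbf{f}_k$. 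Dividing componentwise by $\mathbf{y}_{k+1}$ and taking the minimum, I would compare $\min(A^{-1}\mathbf{y}_{k+1}/\mathbf{y}_{k+1})$ against $\rho(A^{-1})$ using Lemma~\ref{maxmin}, which guarantees $\min(A^{-1}\mathbf{y}_{k+1}/\mathbf{y}_{k+1}) < \rho(A^{-1})$.

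Concretely, from $A^{-1}\mathbf{y}_{k+1}/\mathbf{y}_{k+1} = \overline{\lambda}_k - (\mathbf{x}_k + A^{-1}\mathbf{f}_k)/\mathbf{y}_{k+1}$ I would take the minimum over components; the left side is bounded above by $\rho(A^{-1})$ via Lemma~\ref{maxmin}, giving
\begin{equation*}
\rho(A^{-1}) \geq \overline{\lambda}_k - \max\left(\frac{\mathbf{x}_k + A^{-1}\mathbf{f}_k}{\mathbf{y}_{k+1}}\right).
\end{equation*}
The task then reduces to showing that the quantity subtracted in (\ref{eq:updateeig}), namely $(1-\gamma_k)\min(\mathbf{x}_k/\mathbf{y}_{k+1})$, does not exceed $\max((\mathbf{x}_k + A^{-1}\mathbf{f}_k)/\mathbf{y}_{k+1})$, for then $\overline{\lambda}_{k+1} = \overline{\lambda}_k - (1-\gamma_k)\min(\mathbf{x}_k/\mathbf{y}_{k+1}) \geq \overline{\lambda}_k - \max((\mathbf{x}_k + A^{-1}\mathbf{f}_k)/\mathbf{y}_{k+1}) \geq \rho(A^{-1})$. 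To close this gap I would use the perturbation bound $A^{-1}\mathbf{f}_k \geq -\gamma_k \min(\mathbf{x}_k)\,\mathbf{1}$ componentwise (again from (\ref{eq:strategy})), so that $\mathbf{x}_k + A^{-1}\mathbf{f}_k \geq (1-\gamma_k)\mathbf{x}_k$ in the sense that each component dominates $(1-\gamma_k)$ times the corresponding component of $\mathbf{x}_k$; dividing by $\mathbf{y}_{k+1}$ and passing to the maximum then dominates $(1-\gamma_k)\min(\mathbf{x}_k/\mathbf{y}_{k+1})$. I expect this last componentwise comparison—correctly tracking how the uniform bound $\gamma_k\min(\mathbf{x}_k)$ on $A^{-1}\mathbf{f}_k$ interacts with the $\min$ and $\max$ of the ratios—to be the main obstacle, since it is exactly the point where the chosen form of the relaxed update (\ref{eq:updateeig}) is engineered to preserve the lower bound despite the inexactness.
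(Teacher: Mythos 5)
Your positivity argument and your proof that $\{\overline{\lambda}_k\}$ is strictly decreasing coincide with the paper's proof. The lower bound $\overline{\lambda}_{k+1}\geq\rho(A^{-1})$, however, contains a genuine logical error: you invoked the wrong side of the Collatz--Wielandt inequalities of Lemma~\ref{maxmin}. Taking the componentwise minimum of the identity $\frac{A^{-1}\mathbf{y}_{k+1}}{\mathbf{y}_{k+1}}=\overline{\lambda}_k-\frac{\mathbf{x}_k+A^{-1}\mathbf{f}_k}{\mathbf{y}_{k+1}}$ gives, exactly as you wrote,
\[
\rho(A^{-1})\;\geq\;\min\left(\frac{A^{-1}\mathbf{y}_{k+1}}{\mathbf{y}_{k+1}}\right)=\overline{\lambda}_k-\max\left(\frac{\mathbf{x}_k+A^{-1}\mathbf{f}_k}{\mathbf{y}_{k+1}}\right),
\]
i.e.\ $\rho(A^{-1})$ sits \emph{above} the quantity $\overline{\lambda}_k-\max(\cdot)$. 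But your concluding chain asserts $\overline{\lambda}_{k+1}\geq\overline{\lambda}_k-\max\left(\frac{\mathbf{x}_k+A^{-1}\mathbf{f}_k}{\mathbf{y}_{k+1}}\right)\geq\rho(A^{-1})$, whose final inequality is precisely the reverse of what you had just derived. Knowing that both $\overline{\lambda}_{k+1}$ and $\rho(A^{-1})$ dominate the same number proves nothing about their relative order, so the argument collapses at its last step; moreover, no improvement of the comparison between $(1-\gamma_k)\min(\mathbf{x}_k/\mathbf{y}_{k+1})$ and $\max\left((\mathbf{x}_k+A^{-1}\mathbf{f}_k)/\mathbf{y}_{k+1}\right)$ can repair it, because the min side of Lemma~\ref{maxmin} only ever produces upper bounds by $\rho(A^{-1})$, never lower bounds.

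The repair is to use the max side of Lemma~\ref{maxmin}, which is what the paper does. Taking the componentwise \emph{maximum} of the same identity, and noting that $\mathbf{x}_{k+1}$ is a positive multiple of $\mathbf{y}_{k+1}$, one gets
\[
\widehat{\lambda}_{k+1}:=\max\left(\frac{A^{-1}\mathbf{x}_{k+1}}{\mathbf{x}_{k+1}}\right)=\overline{\lambda}_k-\min\left(\frac{\mathbf{x}_k+A^{-1}\mathbf{f}_k}{\mathbf{y}_{k+1}}\right)\;\geq\;\rho(A^{-1}).
\]
Your componentwise perturbation bound $\mathbf{x}_k+A^{-1}\mathbf{f}_k\geq(1-\gamma_k)\mathbf{x}_k$ is exactly the right ingredient, but it must be paired min-to-min rather than max-to-min: dividing by $\mathbf{y}_{k+1}>\mathbf{0}$ and taking minima gives $\min\left(\frac{\mathbf{x}_k+A^{-1}\mathbf{f}_k}{\mathbf{y}_{k+1}}\right)\geq(1-\gamma_k)\min\left(\frac{\mathbf{x}_k}{\mathbf{y}_{k+1}}\right)$, whence by (\ref{eq:updateeig}) we obtain $\overline{\lambda}_{k+1}=\overline{\lambda}_k-(1-\gamma_k)\min\left(\frac{\mathbf{x}_k}{\mathbf{y}_{k+1}}\right)\geq\widehat{\lambda}_{k+1}\geq\rho(A^{-1})$. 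With this re-wiring, all of your ingredients (the inexact relation (\ref{eq: ine relation2}), Lemma~\ref{maxmin}, and the bound coming from (\ref{eq:strategy})) assemble into precisely the paper's proof.
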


\begin{proof}
From (\ref{eq:updateeig}) and $\gamma _{k}\in [0,1) $, it is easy to know
that $\left\{ \overline{\lambda }_{k}\right\} $ is monotonically decreasing,
i.e., $\overline{\lambda }_{k}>\overline{\lambda }_{k+1}.$ \newline
From (\ref{eq:strategy}),
\begin{equation}
\left\Vert A^{-1}\mathbf{f}_{k}\right\Vert \leq \left\Vert A^{-1}\right\Vert
\left\Vert \mathbf{f}_{k}\right\Vert \leq \gamma _{k}\min (\mathbf{x}_{k}),
\label{eq:normAf}
\end{equation}%
which implies $\left\vert A^{-1}\mathbf{f}_{k}\right\vert \leq \gamma _{k}%
\mathbf{x}_{k},$ then $\mathbf{x}_{k}+A^{-1}\mathbf{f}_{k}>0.$ Consequently,
$\overline{\lambda }_{k}I-A^{-1}$ is a nonsingular $M$-matrix, and the
vector $\mathbf{y}_{k+1}$ satisfies
\begin{equation*}
\mathbf{y}_{k+1}=(\overline{\lambda }_{k}I-A^{-1})^{-1}\left( \mathbf{x}%
_{k}+A^{-1}\mathbf{f}_{k}\right) >\mathbf{0}.
\end{equation*}%
This implies $\mathbf{x}_{k+1}=\mathbf{y}_{k+1}\,/\,\Vert \mathbf{y}%
_{k+1}\Vert >\mathbf{0}$.

We now prove $\overline{\lambda }_{k}$ is bounded below by $\rho (A^{-1}).$
From (\ref{eq:normAf}), we have
\begin{equation}
\left( 1-\gamma _{k}\right) \mathbf{x}_{k}\leq \mathbf{x}_{k}+A^{-1}\mathbf{f%
}_{k}\leq \left( 1+\gamma _{k}\right) \mathbf{x}_{k},  \label{eq:inqu_gamma}
\end{equation}%
and
\begin{equation*}
\left( 1-\gamma _{k}\right) \frac{\mathbf{x}_{k}}{\mathbf{y}_{k+1}}\leq
\frac{\mathbf{x}_{k}+A^{-1}\mathbf{f}_{k}}{\mathbf{y}_{k+1}}\leq \left(
1+\gamma _{k}\right) \frac{\mathbf{x}_{k}}{\mathbf{y}_{k+1}}.
\end{equation*}%
Hence, we obtain%
\begin{equation*}
\left( 1-\gamma _{k}\right) \min \left( \frac{\mathbf{x}_{k}}{\mathbf{y}%
_{k+1}}\right) \leq \min \left( \frac{\mathbf{x}_{k}+A^{-1}\mathbf{f}_{k}}{%
\mathbf{y}_{k+1}}\right) \leq \left( 1+\gamma _{k}\right) \min \left( \frac{%
\mathbf{x}_{k}}{\mathbf{y}_{k+1}}\right) ,
\end{equation*}%
then
\begin{equation}
\left\vert \min \left( \frac{\mathbf{x}_{k}+A^{-1}\mathbf{f}_{k}}{\mathbf{y}%
_{k+1}}\right) -\min \left( \frac{\mathbf{x}_{k}}{\mathbf{y}_{k+1}}\right)
\right\vert \leq \gamma _{k}\min \left( \frac{\mathbf{x}_{k}}{\mathbf{y}%
_{k+1}}\right) .  \label{eq:gap}
\end{equation}%
From (\ref{eq: ine relation2}), it follows that%
\begin{equation}
\text{ }\rho (A^{-1})\leq \widehat{\lambda }_{k+1}=\max \left( \frac{A^{-1}%
\mathbf{x}_{k+1}}{\mathbf{x}_{k+1}}\right) =\overline{\lambda }_{k}-\min
\left( \frac{\mathbf{x}_{k}+A^{-1}\mathbf{f}_{k}}{\mathbf{y}_{k+1}}\right) .
\label{eq:updatelam}
\end{equation}%
Combine (\ref{eq:updateeig}), (\ref{eq:updatelam}) and (\ref{eq:gap}), then
\begin{eqnarray}
\overline{\lambda }_{k+1} &=&\overline{\lambda }_{k}-\left( 1-\gamma
_{k}\right) \min \left( \frac{\mathbf{x}_{k}}{\mathbf{y}_{k+1}}\right)
\notag \\
&=&\widehat{\lambda }_{k+1}+\min \left( \frac{\mathbf{x}_{k}+A^{-1}\mathbf{f}%
_{k}}{\mathbf{y}_{k+1}}\right) -\left( 1-\gamma _{k}\right) \min \left(
\frac{\mathbf{x}_{k}}{\mathbf{y}_{k+1}}\right)  \label{eq: lam_relation} \\
&\geq &\widehat{\lambda }_{k+1}+\left( 1-\gamma _{k}-\left( 1-\gamma
_{k}\right) \right) \min \left( \frac{\mathbf{x}_{k}}{\mathbf{y}_{k+1}}%
\right)  \notag \\
&>&\rho (A^{-1}).  \notag
\end{eqnarray}%
By induction, $\left\{ \overline{\lambda }_{k}\right\} $ is bounded below by
$\rho (A^{-1}),$ i.e.,
\begin{equation*}
\overline{\lambda }_{k}>\rho (A^{-1})\text{ for all }k.
\end{equation*}
\end{proof}

From Lemma~\ref{monotone2}, since $\left\{ \overline{\lambda }_{k}\right\} $
is a monotonically decreasing and bounded sequence, we must have $%
\lim_{k\rightarrow \infty }\overline{\lambda }_{k}=\alpha \geq \rho (A^{-1})$%
, where $\alpha =\rho (A^{-1})$ or $\alpha >\rho (A^{-1})$. We next
investigate the case $\alpha >\rho (A^{-1})$, and present some basic results; this plays an important role later in proving the convergence of INI.

\begin{lemma}
\label{opposite}For Algorithm~\ref{alg:ini}, if $\overline{\lambda }_{k}$ is
converge to $\alpha >\rho (A^{-1})$, then (i) $\Vert \mathbf{y}_{k}\Vert $is
bounded$;$(ii) $\underset{k\rightarrow \infty }{\lim }\min (\mathbf{x}%
_{k})=0;$(iii) $\sin \angle \left( \mathbf{x}_{k},\mathbf{x}\right) \geq m
>0 $ for some constant $m >0$, where $\angle (\mathbf{x_k},\mathbf{x})$ the
acute angle of $\mathbf{x_k}$ and $\mathbf{x}$.
\end{lemma}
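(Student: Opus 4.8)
The plan is to establish the three assertions in order, since (ii) uses (i) and (iii) uses (ii); all of this takes place in the ``bad'' regime $\alpha>\rho(A^{-1})$, and I will use repeatedly that $\{\overline{\lambda}_k\}$ is decreasing (Lemma~\ref{monotone2}), so that $\overline{\lambda}_k\in[\alpha,\overline{\lambda}_0]$ for every $k$. For (i), the point is that this interval is bounded away from the spectrum of $A^{-1}$: every eigenvalue $\mu$ of $A^{-1}$ satisfies $|\mu|\le\rho(A^{-1})<\alpha\le\overline{\lambda}_k$, so $\overline{\lambda}_kI-A^{-1}$ is nonsingular, and the map $\lambda\mapsto\|(\lambda I-A^{-1})^{-1}\|$ is continuous on the compact interval $[\alpha,\overline{\lambda}_0]$, hence attains a finite maximum $C$ there. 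Writing $\mathbf{y}_{k+1}=(\overline{\lambda}_kI-A^{-1})^{-1}(\mathbf{x}_k+A^{-1}\mathbf{f}_k)$ from (\ref{eq: ine relation2}) and combining $\|\mathbf{x}_k\|=1$ with the relaxation bound $\|A^{-1}\mathbf{f}_k\|\le\gamma_k\min(\mathbf{x}_k)\le\gamma$ from (\ref{eq:normAf}) yields $\|\mathbf{y}_{k+1}\|\le C(1+\gamma)$, the desired uniform bound.

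For (ii), I exploit the eigenvalue recurrence (\ref{eq:updateeig}). Since $\{\overline{\lambda}_k\}$ converges, $\overline{\lambda}_k-\overline{\lambda}_{k+1}\to0$, so $(1-\gamma_k)\min(\mathbf{x}_k/\mathbf{y}_{k+1})\to0$; because $1-\gamma_k\ge 1-\gamma>0$, this forces $\min(\mathbf{x}_k/\mathbf{y}_{k+1})\to0$. Then the elementary inequality $\min(\mathbf{x}_k/\mathbf{y}_{k+1})\ge\min(\mathbf{x}_k)/\|\mathbf{y}_{k+1}\|$ together with the uniform bound from (i) gives $\min(\mathbf{x}_k)\le C(1+\gamma)\min(\mathbf{x}_k/\mathbf{y}_{k+1})\to0$.

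For (iii), I decompose the unit vector as $\mathbf{x}_k=\cos\theta_k\,\mathbf{x}+\sin\theta_k\,\mathbf{z}_k$ with $\theta_k=\angle(\mathbf{x}_k,\mathbf{x})$, $\mathbf{z}_k\perp\mathbf{x}$ and $\|\mathbf{z}_k\|=1$. Since the Perron vector $\mathbf{x}$ is strictly positive and $\|\mathbf{z}_k\|_\infty\le\|\mathbf{z}_k\|_2=1$, each component obeys $\mathbf{x}_k^{(i)}\ge\cos\theta_k\min(\mathbf{x})-\sin\theta_k$, whence $\min(\mathbf{x}_k)\ge\cos\theta_k\min(\mathbf{x})-\sin\theta_k$. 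If $\sin\theta_k$ failed to stay bounded away from $0$, then along a subsequence $\cos\theta_k\to1$ and $\sin\theta_k\to0$, forcing $\liminf\min(\mathbf{x}_k)\ge\min(\mathbf{x})>0$ and contradicting (ii). Quantifying this (for instance by treating $\cos\theta_k\ge 3/4$ and $\cos\theta_k<3/4$ separately) produces a constant $m>0$ with $\sin\theta_k\ge m$ for all large $k$; since $\mathbf{x}_k\neq\mathbf{x}$ gives $\sin\theta_k>0$ for the finitely many remaining indices, the bound holds for all $k$ after shrinking $m$.

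I expect (iii) to be the main obstacle, not because the estimate is deep but because turning the subsequence argument into a single constant $m$ valid for every $k$ requires some care; the essential ingredients are the orthogonal decomposition and the strict positivity of $\mathbf{x}$, with the observation $\|\mathbf{z}_k\|_\infty\le 1$ keeping the perturbation term uniform in $k$. Parts (i) and (ii) are then routine once the uniform resolvent bound in (i) is in hand.
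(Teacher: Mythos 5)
Your argument is correct in all three parts. For (i) and (ii) you essentially retrace the paper's own steps: the paper bounds $\Vert\mathbf{y}_{k+1}\Vert\le(1+\gamma_k)\Vert(\overline{\lambda}_kI-A^{-1})^{-1}\Vert\le 2\,\mathrm{sep}(\alpha,A^{-1})^{-1}$, using $\overline{\lambda}_k\ge\alpha$ to keep the resolvent norm bounded where you invoke continuity of $\lambda\mapsto\Vert(\lambda I-A^{-1})^{-1}\Vert$ on the compact interval $[\alpha,\overline{\lambda}_0]$ (an equivalent device); it then deduces $\min(\mathbf{x}_k/\mathbf{y}_{k+1})\to 0$ from the update (\ref{eq:updateeig}) and passes to $\min(\mathbf{x}_k)\to 0$ via $\min(\mathbf{x}_k/\mathbf{y}_{k+1})\ge\min(\mathbf{x}_k)/\max(\mathbf{y}_{k+1})$, exactly as you do. Part (iii) is where you take a genuinely different route. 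The paper argues through the eigenvalue approximations: if $\sin\angle(\mathbf{x}_{k_j},\mathbf{x})\to 0$ along a subsequence, then $\widehat{\lambda}_{k_j}=\max\left(A^{-1}\mathbf{x}_{k_j}/\mathbf{x}_{k_j}\right)\to\rho(A^{-1})$ by continuity of the Collatz--Wielandt ratio at the positive vector $\mathbf{x}$, and the gap bound $|\widehat{\lambda}_k-\overline{\lambda}_k|\le 2\gamma_k\min(\mathbf{x}_{k-1}/\mathbf{y}_k)\to 0$ obtained from (\ref{eq: lam_relation}) and (\ref{eq:gap}) then forces $\overline{\lambda}_{k_j}\to\rho(A^{-1})$, contradicting the standing hypothesis $\alpha>\rho(A^{-1})$. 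You instead contradict part (ii) directly: the componentwise estimate $\min(\mathbf{x}_k)\ge\cos\varphi_k\min(\mathbf{x})-\sin\varphi_k$, coming from the orthogonal decomposition and the strict positivity of the Perron vector, shows that vanishing angles along a subsequence would keep $\min(\mathbf{x}_{k_j})$ bounded below by a positive constant. Your route is more elementary --- it needs neither the continuity of the max-ratio function nor any relation between $\widehat{\lambda}_k$ and $\overline{\lambda}_k$ --- and your inequality is a quantitative cousin of the paper's Lemma~\ref{coslower}, which likewise ties angles to positivity. What the paper's argument buys in exchange is the explicit by-product $|\widehat{\lambda}_k-\overline{\lambda}_k|\to 0$, i.e.\ the relaxed updates asymptotically track the true Collatz--Wielandt values, a fact in harmony with the rest of its convergence analysis. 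Finally, your concern about promoting the subsequence argument to a single constant $m$ is unfounded: since the iteration assumes $\mathbf{x}_k\ne\mathbf{x}$ throughout (as in Lemma~\ref{monotone2}), each $\sin\varphi_k>0$, and the subsequence contradiction shows precisely that $\inf_k\sin\varphi_k>0$, so you may take $m$ to be this infimum; no case splitting is needed.
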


\begin{proof}
(i). From (\ref{eq:inqu_gamma}), we get
\begin{align}
\Vert \mathbf{y}_{k+1}\Vert & =\Vert \left( \overline{\lambda }%
_{k}I-A^{-1}\right) ^{-1}\left( \mathbf{x}_{k}+A^{-1}\mathbf{f}_{k}\right)
\Vert \leq (1+\gamma _{k})\Vert (\overline{\lambda }_{k}I-A^{-1})^{-1}\Vert
\notag \\
& =2\mathrm{sep}(\overline{\lambda }_{k},A^{-1})^{-1}\leq 2\mathrm{sep}%
(\alpha ,A^{-1})^{-1}<\infty .  \label{eq:bdyk}
\end{align}%
(ii). From (\ref{eq:updateeig}) it follows that
\begin{equation}
\underset{k\rightarrow \infty }{\lim }\min \left( \frac{\mathbf{x}_{k}}{%
\mathbf{y}_{k+1}}\right) =\underset{k\rightarrow \infty }{\lim }\left( \frac{%
\overline{\lambda }_{k}-\overline{\lambda }_{k+1}}{1-\gamma _{k}}\right) =0.
\label{eq: minfk}
\end{equation}%
On the other hand, from (\ref{eq:bdyk}) and (\ref{eq: minfk}) we have%
\begin{equation*}
\min \left( \frac{\mathbf{x}_{k}}{\mathbf{y}_{k+1}}\right) \geq \frac{\min (%
\mathbf{x}_{k})}{\max \left( \mathbf{y}_{k+1}\right) }\geq \frac{\min (%
\mathbf{x}_{k})\mathrm{sep}(\alpha ,A^{-1})}{2}>0.
\end{equation*}%
Thus, it is holds that%
\begin{equation}
\underset{k\rightarrow \infty }{\lim }\min (\mathbf{x}_{k})=0.
\label{eq:minxkfk}
\end{equation}%
(iii) Suppose there is a subsequence $\{\sin \angle (\mathbf{x}_{k_{j}},%
\mathbf{x})\}$ which converges to zero, then
\begin{equation*}
\underset{j\rightarrow \infty }{\lim }\widehat{\lambda }_{k_{j}}=\underset{%
j\rightarrow \infty }{\lim }\max \left( \frac{A^{-1}\mathbf{x}_{k_{j}}}{%
\mathbf{x}_{k_{j}}}\right) =\max \left( \underset{j\rightarrow \infty }{\lim
}\frac{A^{-1}\mathbf{x}_{k_{j}}}{\mathbf{x}_{k_{j}}}\right) =\rho (A^{-1}).
\end{equation*}%
By the definition of $\widehat{\lambda }_{k},$ from (\ref{eq: lam_relation})
and (\ref{eq:gap}), we have%
\begin{eqnarray}
\left\vert \widehat{\lambda }_{k}-\overline{\lambda }_{k}\right\vert
&=&\left\vert \min \left( \frac{\mathbf{x}_{k-1}+A^{-1}\mathbf{f}_{k-1}}{%
\mathbf{y}_{k}}\right) -\left( 1-\gamma _{k}\right) \min \left( \frac{%
\mathbf{x}_{k-1}}{\mathbf{y}_{k}}\right) \right\vert  \notag \\
&\leq &\left\vert \left( 1+\gamma _{k}-\left( 1-\gamma _{k}\right) \right)
\min \left( \frac{\mathbf{x}_{k-1}}{\mathbf{y}_{k}}\right) \right\vert \text{
for any }k.  \label{eq: boundlambdagap}
\end{eqnarray}%
From (\ref{eq: minfk}) and (\ref{eq: boundlambdagap}),
\begin{equation*}
\underset{j\rightarrow \infty }{\lim }\overline{\lambda }_{k_{j}}=\underset{%
j\rightarrow \infty }{\lim }\left( \overline{\lambda }_{k_{j}}-\widehat{%
\lambda }_{k_{j}}+\widehat{\lambda }_{k_{j}}\right) =\rho (A^{-1}).
\end{equation*}%
This is a contradiction.
\end{proof}

\begin{lemma}[\protect\cite{Jia14}]
\label{coslower}Let $\mathbf{x}>\mathbf{0}$ be the unit length eigenvector
of $A$ associated with $\sigma _{\min }(A)$. For any vector $\mathbf{z}>%
\mathbf{0}$ with $\Vert \mathbf{z}\Vert =1$, it holds that $\cos \angle (%
\mathbf{z},\mathbf{x})>\min (\mathbf{x})$ and
\begin{equation}
\underset{\Vert \mathbf{z}\Vert =1,\,\mathbf{z}>\mathbf{0}}{\inf }\cos
\angle (\mathbf{z},\mathbf{x})=\min (\mathbf{x}).  \label{eq:coslower}
\end{equation}
\end{lemma}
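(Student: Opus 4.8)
The plan is to reduce the whole statement to a single inner product. Since both $\mathbf{x}$ and $\mathbf{z}$ are strictly positive unit vectors, their inner product $\mathbf{z}^T\mathbf{x}=\sum_{i=1}^{n}\mathbf{z}^{(i)}\mathbf{x}^{(i)}$ is positive, so the acute angle between them satisfies $\cos\angle(\mathbf{z},\mathbf{x})=\mathbf{z}^T\mathbf{x}/(\Vert\mathbf{z}\Vert\,\Vert\mathbf{x}\Vert)=\mathbf{z}^T\mathbf{x}$. Hence it suffices to prove $\mathbf{z}^T\mathbf{x}>\min(\mathbf{x})$ together with the stated infimum.

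For the lower bound I would bound each summand from below, using $\mathbf{x}^{(i)}\geq\min(\mathbf{x})$ and $\mathbf{z}^{(i)}>0$, to get $\mathbf{z}^T\mathbf{x}\geq\min(\mathbf{x})\sum_{i=1}^{n}\mathbf{z}^{(i)}=\min(\mathbf{x})\Vert\mathbf{z}\Vert_{1}$. The step that manufactures strictness is the elementary norm comparison $\Vert\mathbf{z}\Vert_{1}>\Vert\mathbf{z}\Vert_{2}=1$, valid for any vector with at least two nonzero entries; since $\mathbf{z}>\mathbf{0}$, all $n$ entries are nonzero, so this is strict whenever $n\geq2$. Combining gives $\mathbf{z}^T\mathbf{x}\geq\min(\mathbf{x})\Vert\mathbf{z}\Vert_{1}>\min(\mathbf{x})$, which is the first claim.

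To identify the infimum, note the inequality just proved already shows $\inf\cos\angle(\mathbf{z},\mathbf{x})\geq\min(\mathbf{x})$. For the reverse direction I would exhibit an explicit minimizing sequence: letting $i^{*}$ be an index with $\mathbf{x}^{(i^{*})}=\min(\mathbf{x})$, define $\mathbf{z}_{\varepsilon}$ to be the normalization of $\mathbf{e}_{i^{*}}+\varepsilon\mathbf{1}$, where $\mathbf{1}$ is the all-ones vector. Each $\mathbf{z}_{\varepsilon}$ is strictly positive for $\varepsilon>0$ and tends to $\mathbf{e}_{i^{*}}$ as $\varepsilon\to0^{+}$, so $\mathbf{z}_{\varepsilon}^T\mathbf{x}\to\mathbf{x}^{(i^{*})}=\min(\mathbf{x})$. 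This forces the infimum to be at most $\min(\mathbf{x})$, and equality follows.

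The argument is genuinely elementary, so there is no serious obstacle; the only point demanding care is the passage from the weak bound $\geq\min(\mathbf{x})\Vert\mathbf{z}\Vert_{1}$ to the strict inequality, which rests entirely on $\Vert\mathbf{z}\Vert_{1}>\Vert\mathbf{z}\Vert_{2}$ holding strictly for positive vectors (the degenerate case $n=1$ being trivial, and excluded here since irreducibility forces $n\geq2$). Consistent with this strictness, the infimum $\min(\mathbf{x})$ is approached but never attained, which is exactly why a limiting sequence rather than a single extremal $\mathbf{z}$ is needed in the second half.
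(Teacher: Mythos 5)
Your proof is correct. Note that this paper does not actually prove Lemma~\ref{coslower} --- it is imported verbatim from the cited reference \cite{Jia14} --- and your argument (reduce to $\mathbf{z}^T\mathbf{x}$, bound it below by $\min(\mathbf{x})\Vert\mathbf{z}\Vert_1$ with strictness from $\Vert\mathbf{z}\Vert_1>\Vert\mathbf{z}\Vert_2=1$ for positive vectors, then exhibit a minimizing sequence tending to the coordinate vector $\mathbf{e}_{i^*}$ at a minimal component of $\mathbf{x}$) is precisely the standard proof given there, including your correct observation that the infimum is not attained.
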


%Throughout the paper, $\angle (\mathbf{w},\mathbf{z})$ denotes the acute
%angle of any two nonzero vectors $\mathbf{w}$ and $\mathbf{z}$.

\section{Convergence Analysis for INI}

\label{sec:conv}

In Sections~4.1--4.2, we will prove the global convergence and the
convergence rate of INI. Furthermore, we will derive the explicit linear
convergence factor and the superlinear convergence order with different $\gamma
_{k}$.

\subsection{Convergence Analysis}

For an irreducible non-negative matrix $A^{-1}$, recall that the largest
eigenvalue $\rho (A^{-1})$ of $A^{-1}$ is simple. Let $\mathbf{x}$ be the
unit length positive eigenvector corresponding to $\rho (A^{-1})$. Then for
any orthogonal matrix $\left[
\begin{array}{cc}
\mathbf{x} & V%
\end{array}%
\right] $ it holds (cf. \cite{GLo96}) that
\begin{equation}
\left[
\begin{array}{c}
\mathbf{x}^{T} \\
V^{T}%
\end{array}%
\right] A^{-1}\left[
\begin{array}{cc}
\mathbf{x} & V%
\end{array}%
\right] =\left[
\begin{array}{cc}
\rho (A^{-1}) & \mathbf{c}^{T} \\
0 & L%
\end{array}%
\right]  \label{eqn: partition}
\end{equation}%
with $L=V^{T}A^{-1}V$ whose eigenvalues constitute the other eigenvalues of $%
A^{-1}$.\ Therefore, we now define
\begin{equation}
\varepsilon _{k}=\overline{\lambda }_{k}-\rho (A^{-1}),\quad A_{k}=\overline{%
\lambda }_{k}I-A^{-1}.  \label{epsilonk}
\end{equation}%
Similar to (\ref{eqn: partition}) we also have the spectral decomposition
\begin{equation}
\left[
\begin{array}{c}
\mathbf{x}^{T} \\
V^{T}%
\end{array}%
\right] A_{k}\left[
\begin{array}{cc}
\mathbf{x} & V%
\end{array}%
\right] =\left[
\begin{array}{cc}
\varepsilon _{k} & 0 \\
0 & L_{k}%
\end{array}%
\right] ,  \label{eqLpart2}
\end{equation}%
where $L_{k}=\overline{\lambda }_{k}I-L$. For $\overline{\lambda }%
_{k}\not=\rho (A^{-1})$, it is easy to verify that
\begin{equation}
\left[
\begin{array}{c}
\mathbf{x}^{T} \\
V^{T}%
\end{array}%
\right] A_{k}^{-1}\left[
\begin{array}{cc}
\mathbf{x} & V%
\end{array}%
\right] =\left[
\begin{array}{cc}
\varepsilon _{k}^{-1} & \mathbf{b}_{k}^{T} \\
0 & L_{k}^{-1}%
\end{array}%
\right]
\mbox{\ \ with
$\mathbf{b}_k^T=-\frac{\mathbf{c}^TL_k^{-1}}{\varepsilon_{k}}$},
\label{eqLpart3}
\end{equation}%
from which we get
\begin{eqnarray}
A_{k}^{-1}V &=&\mathbf{x}\mathbf{b}_{k}^{T}+VL_{k}^{-1}=-\mathbf{x}\frac{%
\mathbf{c}^{T}L_{k}^{-1}}{\varepsilon _{k}}+VL_{k}^{-1},  \label{BkV} \\
A_{k}^{-1} &=&\varepsilon _{k}^{-1}\mathbf{x}\mathbf{x}^{T}-\varepsilon
_{k}^{-1}\mathbf{x}\mathbf{c}^{T}L_{k}^{-1}V^{T}+VL_{k}^{-1}V^{T},
\label{Bkinv}
\end{eqnarray}%
and
\begin{equation*}
\mathbf{x}^{T}A_{k}^{-1}=\varepsilon _{k}^{-1}\mathbf{x}^{T}-\varepsilon
_{k}^{-1}\mathbf{c}^{T}L_{k}^{-1}V^{T}.
\end{equation*}

Let $\{\mathbf{x}_{k}\}$ be generated by Algorithm~\ref{alg:ini}. We
decompose $\mathbf{x}_{k}$ into the orthogonal direct sum by%
\begin{equation}
\mathbf{x}_{k}=\mathbf{x}\cos (\varphi _{k})+\mathbf{p}_{k}\sin (\varphi
_{k}),\quad \mathbf{p}_{k}\in \text{span}(V)\perp \mathbf{x}
\label{eq:decomposition}
\end{equation}%
with $\Vert \mathbf{p}_{k}\Vert =1$ and $\varphi _{k}$ $=$ $\angle \left(
\mathbf{x}_{k},\mathbf{x}\right) $ being the acute angle between $\mathbf{x}%
_{k}$ and $\mathbf{x}$. So by definition, we have $\cos \varphi _{k}=\mathbf{%
x}^{T}\mathbf{x}_{k}$ and $\sin \varphi _{k}=\Vert V^{T}\mathbf{x}_{k}\Vert
. $ Evidently, $\mathbf{x}_{k}\rightarrow \mathbf{x}$ if and only if $\tan
\varphi _{k}\rightarrow 0$, i.e., $\sin \varphi _{k}\rightarrow 0$.

Since $\xi _{k}=\Vert \mathbf{f}_{k}\Vert \leq \gamma _{k}\mathrm{sep}%
(0,A)\min (\mathbf{x}_{k})$ in INI, it holds that $\left\vert A^{-1}\mathbf{%
f}_{k}\right\vert \leq \gamma _{k}\mathbf{x}_{k}$. Therefore, we have
\begin{equation}
\left( 1-\gamma _{k}\right) \mathbf{x}_{k}\leq \mathbf{x}_{k}+A^{-1}\mathbf{f%
}_{k}\leq \left( 1+\gamma _{k}\right) \mathbf{x}_{k}.  \label{eq: xAf_k}
\end{equation}%
As $A_{k}^{-1}\geq 0$, it follows from the above relation that
\begin{equation}
\left( 1-\gamma _{k}\right) A_{k}^{-1}\mathbf{x}_{k}\leq \mathbf{y}%
_{k+1}\leq \left( 1+\gamma _{k}\right) A_{k}^{-1}\mathbf{x}_{k}.
\label{eq: yk+1bound}
\end{equation}%
Using the above relation, we obtain
%$\Vert V^{T}\Vert =1$ and $\Vert V^{T}(%
%\mathbf{x}_{k}+A^{-1}\mathbf{f}_{k})\Vert \leq \sin \varphi _{k}+\Vert A^{-1}%
%\mathbf{f}_{k}\Vert $, we obtain %{\small
%\begin{eqnarray}
%\tan \varphi _{k+1} &=&\frac{\sin \varphi _{k+1}}{\cos \varphi _{k+1}}=\frac{%
%\Vert V^{T}\mathbf{x}_{k+1}\Vert }{\mathbf{x}^{T}\mathbf{x}_{k+1}}=\frac{%
%\Vert V^{T}\mathbf{y}_{k+1}\Vert }{\mathbf{x}^{T}\mathbf{y}_{k+1}}  \notag \\
%&=&\frac{\Vert V^{T}A_{k}^{-1}(\mathbf{x}_{k}+A^{-1}\mathbf{f}_{k})\Vert }{%
%\mathbf{x}^{T}A_{k}^{-1}\left( \mathbf{x}_{k}+A^{-1}\mathbf{f}_{k}\right) }
%\notag \\
%&\leq &\frac{\Vert L_{k}^{-1}V^{T}\left( \mathbf{x}_{k}+A^{-1}\mathbf{f}%
%_{k}\right) \Vert }{\left( \varepsilon _{k}^{-1}\mathbf{x}^{T}-\varepsilon
%_{k}^{-1}\mathbf{c}^{T}L_{k}^{-1}V^{T}\right) \left( 1-\gamma _{k}\right)
%\mathbf{x}_{k}}  \notag \\
%&=&\frac{\Vert L_{k}^{-1}V^{T}\left( \mathbf{x}_{k}+A^{-1}\mathbf{f}%
%_{k}\right) \Vert }{\left( 1-\gamma _{k}\right) \left( \varepsilon _{k}^{-1}%
%\mathbf{x}^{T}\mathbf{x}_{k}-\varepsilon _{k}^{-1}\mathbf{c}%
%^{T}L_{k}^{-1}V^{T}\mathbf{x}_{k}\right) }  \notag \\
%&\leq &\frac{\Vert L_{k}^{-1}\Vert \,\varepsilon _{k}}{\left( 1-\gamma
%_{k}\right) }\,\frac{\sin \varphi _{k}+\Vert A^{-1}\mathbf{f}_{k}\Vert }{%
%\cos \varphi _{k}-\mathbf{c}^{T}L_{k}^{-1}V^{T}\mathbf{x}_{k}}  \notag \\
%&=&\frac{\Vert L_{k}^{-1}\Vert \,\varepsilon _{k}}{\left( 1-\gamma
%_{k}\right) }\,\frac{\tan \varphi _{k}+\Vert A^{-1}\mathbf{f}_{k}\Vert /\cos
%\varphi _{k}}{1-\mathbf{c}^{T}L_{k}^{-1}V^{T}\mathbf{x}_{k}/\cos \varphi _{k}%
%}.  \label{eq:tan}
%\end{eqnarray}

\begin{eqnarray}
\tan \varphi _{k+1} &=&\frac{\sin \varphi _{k+1}}{\cos \varphi _{k+1}}=\frac{%
\Vert V^{T}\mathbf{x}_{k+1}\Vert }{\mathbf{x}^{T}\mathbf{x}_{k+1}}=\frac{%
\Vert V^{T}\mathbf{y}_{k+1}\Vert }{\mathbf{x}^{T}\mathbf{y}_{k+1}}  \notag \\
&=&\frac{\Vert V^{T}A_{k}^{-1}(\mathbf{x}_{k}+A^{-1}\mathbf{f}_{k})\Vert }{%
\mathbf{x}^{T}A_{k}^{-1}\left( \mathbf{x}_{k}+A^{-1}\mathbf{f}_{k}\right) }
\notag \\
&=&\frac{\Vert L_{k}^{-1}V^{T}\left( \mathbf{x}_{k}+A^{-1}\mathbf{f}%
_{k}\right) \Vert }{\left( \varepsilon _{k}^{-1}\mathbf{x}^{T}-\varepsilon
_{k}^{-1}\mathbf{c}^{T}L_{k}^{-1}V^{T}\right) \left( \mathbf{x}_{k}+A^{-1}%
\mathbf{f}_{k}\right) }  \notag \\
&=&\frac{\Vert L_{k}^{-1}V^{T}\left( \mathbf{x}_{k}+A^{-1}\mathbf{f}%
_{k}\right) \Vert }{\varepsilon _{k}^{-1}\mathbf{x}^{T}\mathbf{x}%
_{k}-\varepsilon _{k}^{-1}\mathbf{c}^{T}L_{k}^{-1}V^{T}\mathbf{x}%
_{k}+\varepsilon _{k}^{-1}\mathbf{x}^{T}A^{-1}\mathbf{f}_{k}-\varepsilon
_{k}^{-1}\mathbf{c}^{T}L_{k}^{-1}V^{T}A^{-1}\mathbf{f}_{k}}  \notag \\
&\leq &\,\frac{\Vert L_{k}^{-1}\Vert \,\varepsilon _{k}\left( \sin \varphi
_{k}+\Vert A^{-1}\mathbf{f}_{k}\Vert \right) }{\cos \varphi _{k}-\mathbf{c}%
^{T}L_{k}^{-1}V^{T}\mathbf{x}_{k}-\Vert A^{-1}\mathbf{f}_{k}\Vert -\Vert
\mathbf{c}\Vert \Vert L_{k}^{-1}\Vert \Vert A^{-1}\mathbf{f}_{k}\Vert }.
\label{eq:tan}
\end{eqnarray}

Note that if we solve the inner linear system exactly, i.e., $\mathbf{f}_{k}$
$=0$, we recover NI and get
\begin{equation}
\tan \varphi _{k+1}\leq \frac{\Vert L_{k}^{-1}\Vert \,\varepsilon _{k}}{1-%
\mathbf{c}^{T}L_{k}^{-1}V^{T}\mathbf{x}_{k}/\cos \varphi _{k}}\tan \varphi
_{k}:=\beta _{k}\tan \varphi _{k}.  \label{niconv}
\end{equation}%
Since L. Elsner \cite{Els76} proved the quadratic convergence of the
proposed Noda iteration, for $k$ large enough we must have
\begin{equation*}
\beta _{k}=O(\tan \varphi _{k})\rightarrow 0.
\end{equation*}%
It follows that
\begin{equation}
\beta _{k}< \beta <1  \label{betak}
\end{equation}
for any given positive constant $\beta <1$. Therefore, we have
\begin{equation*}
\tan \varphi _{k+1}<\beta \tan \varphi _{k}
\end{equation*}
for $k\geq N$ with $N$ large enough.

\begin{theorem}[Main Theorem]
\label{main} Let $A$ be an irreducible monotone matrix. If the sequence $\{%
\overline{\lambda }_{k}\}$ is generated by INI with the relaxation
strategies (\ref{eq:strategy}) and (\ref{eq:updateeig}), then $\{\overline{%
\lambda }_{k}\}$ is monotonically decreasing and $\lim_{k\rightarrow \infty }%
\overline{\lambda }_{k}^{-1}=\sigma _{\min }(A)$.
\end{theorem}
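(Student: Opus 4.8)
The plan is to separate the two assertions: the monotone decrease, which is essentially already available, and the identification of the limit with $\rho(A^{-1})$, which is where the work lies. Monotonicity I would read off directly from the update rule~(\ref{eq:updateeig}): since $0\le\gamma_k<1$ and the vectors $\mathbf{x}_k,\mathbf{y}_{k+1}$ are strictly positive (Lemma~\ref{monotone2}), the subtracted term $(1-\gamma_k)\min(\mathbf{x}_k/\mathbf{y}_{k+1})$ is strictly positive, so $\overline{\lambda}_{k+1}<\overline{\lambda}_k$. Combined with the lower bound $\overline{\lambda}_k>\rho(A^{-1})$ from Lemma~\ref{monotone2}, the sequence converges monotonically to a limit $\alpha\ge\rho(A^{-1})$. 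Because $\sigma_{\min}(A)=\rho(A^{-1})^{-1}$, it then suffices to prove $\alpha=\rho(A^{-1})$, for passing to reciprocals would yield $\overline{\lambda}_k^{-1}\to\rho(A^{-1})^{-1}=\sigma_{\min}(A)$.

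To pin down the limit I would argue by contradiction, assuming $\alpha>\rho(A^{-1})$ so that Lemma~\ref{opposite} is in force. The crucial consequence is part (ii): $\min(\mathbf{x}_k)\to 0$. Since each $\mathbf{x}_k$ lives in the compact set $\{\mathbf{z}\ge\zero:\|\mathbf{z}\|=1\}$, I can extract a subsequence $\mathbf{x}_{k_j}\to\mathbf{x}_\ast$ with $\mathbf{x}_\ast\ge\zero$, $\|\mathbf{x}_\ast\|=1$, and $\min(\mathbf{x}_\ast)=\lim_j\min(\mathbf{x}_{k_j})=0$. Thus $\mathbf{x}_\ast$ is a nonzero, non-negative unit vector possessing at least one vanishing component; in particular $\mathbf{x}_\ast\neq\mathbf{x}$ since $\mathbf{x}>\zero$.

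The heart of the argument, and the step I expect to be the main obstacle, is to convert a zero component of $\mathbf{x}_\ast$ into a blow-up of the Rayleigh-quotient-like quantity $\widehat{\lambda}_{k}=\max\left(\frac{A^{-1}\mathbf{x}_{k}}{\mathbf{x}_{k}}\right)$ appearing in~(\ref{eq:updatelam}). The key structural fact is that, because $A^{-1}$ is irreducible and non-negative, the zero pattern of a non-negative vector cannot be preserved under $A^{-1}$ unless the vector is strictly positive: there must exist an index $i$ with $\mathbf{x}_\ast^{(i)}=0$ but $(A^{-1}\mathbf{x}_\ast)^{(i)}>0$. Indeed, if no such $i$ existed, then $(A^{-1})_{il}=0$ for every $i$ in the zero set and every $l$ in the support of $\mathbf{x}_\ast$, which is precisely a zero off-diagonal block forbidden by irreducibility. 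For that index, $\mathbf{x}_{k_j}^{(i)}\to 0$ while $(A^{-1}\mathbf{x}_{k_j})^{(i)}\to(A^{-1}\mathbf{x}_\ast)^{(i)}>0$, so
\[
\widehat{\lambda}_{k_j}\ge\frac{(A^{-1}\mathbf{x}_{k_j})^{(i)}}{\mathbf{x}_{k_j}^{(i)}}\longrightarrow+\infty .
\]
This contradicts the uniform bound $\widehat{\lambda}_{k+1}\le\overline{\lambda}_k\le\overline{\lambda}_0$, which follows from~(\ref{eq:updatelam}) because the minimum subtracted there is a ratio of positive vectors and hence strictly positive.

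Consequently the hypothesis $\alpha>\rho(A^{-1})$ is untenable, forcing $\alpha=\rho(A^{-1})$; together with the monotone decrease this gives $\lim_{k\to\infty}\overline{\lambda}_k^{-1}=\rho(A^{-1})^{-1}=\sigma_{\min}(A)$. The delicate point throughout is the tension between the vanishing of $\min(\mathbf{x}_k)$ that Lemma~\ref{opposite} extracts from the false assumption and the finiteness of the auxiliary sequence $\widehat{\lambda}_k$: irreducibility of $A^{-1}$ is exactly what rules out a degenerate boundary limit $\mathbf{x}_\ast$ coexisting with bounded Rayleigh quotients, and that is where I would spend the most care in a full write-up.
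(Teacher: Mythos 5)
Your proposal is correct, but it reaches the contradiction by a genuinely different mechanism than the paper. The paper, after the same reduction to ruling out $\lim_{k\to\infty}\overline{\lambda}_k=\alpha>\rho(A^{-1})$, uses \emph{both} parts (ii) and (iii) of Lemma~\ref{opposite} together with the orthogonal decomposition (\ref{eq:decomposition}) and the angle recursion (\ref{eq:tan}): since $\min(\mathbf{x}_k)\to 0$ makes the inexact terms $A^{-1}\mathbf{f}_k$ asymptotically negligible, the INI contraction factor $\beta_k^{\prime}$ tends to the exact-NI factor $\beta_k$ of (\ref{niconv}), which is eventually below some $\beta<1$ by appeal to Elsner's quadratic-convergence theorem via (\ref{betak}); hence $\tan\varphi_k\to 0$, contradicting the uniform lower bound $\sin\varphi_k\geq m>0$ of part (iii). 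You instead use only part (ii): compactness gives a subsequence $\mathbf{x}_{k_j}\to\mathbf{x}_{\ast}\geq\mathbf{0}$ with $\|\mathbf{x}_{\ast}\|=1$ and $\min(\mathbf{x}_{\ast})=0$, irreducibility of $A^{-1}\geq 0$ forces an index $i$ with $\mathbf{x}_{\ast}^{(i)}=0$ but $(A^{-1}\mathbf{x}_{\ast})^{(i)}>0$ (otherwise the zero set versus support of $\mathbf{x}_{\ast}$ would produce a forbidden zero off-diagonal block), so $\widehat{\lambda}_{k_j}\geq (A^{-1}\mathbf{x}_{k_j})^{(i)}/\mathbf{x}_{k_j}^{(i)}\to\infty$, contradicting the uniform bound $\widehat{\lambda}_{k+1}<\overline{\lambda}_k\leq\overline{\lambda}_0$ that follows from (\ref{eq:updatelam}) because the subtracted minimum there is a ratio of strictly positive vectors (positivity of $\mathbf{x}_k+A^{-1}\mathbf{f}_k$ and $\mathbf{y}_{k+1}$ coming from Lemma~\ref{monotone2}). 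Both routes are sound; yours is more elementary and self-contained, needing no spectral decomposition, no angle recursion, and—notably—no appeal to Elsner's quadratic convergence of exact NI, which is a nontrivial external ingredient the paper's proof relies on. What the paper's heavier machinery buys is quantitative eigenvector information: the contraction $\tan\varphi_{k+1}\leq\beta\tan\varphi_k$ is exactly the kind of estimate elaborated in the rate analysis of Section~4.2, whereas your argument delivers only the eigenvalue limit, which is all the theorem actually asserts.
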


\begin{proof}
From Lemma~\ref{monotone2}, the sequence $\left\{ \overline{\lambda }%
_{k}\right\} $ is bounded and monotonically decreasing, and we must have
either $\lim_{k\rightarrow \infty }\overline{\lambda }_{k}=\rho (A^{-1})$ or
$\lim_{k\rightarrow \infty }\overline{\lambda }_{k}=\alpha >\rho (A^{-1})$.
Next we prove by contradiction that, for INI, $\lim_{k\rightarrow \infty }%
\overline{\lambda }_{k}=\rho (A^{-1})$ must hold.

Suppose that $\lim_{k\rightarrow \infty }\overline{\lambda }_{k}=\alpha
>\rho (A^{-1})$. It follows (iii) of Lemma~\ref{opposite} show that
\begin{equation}
\frac{1}{\cos \varphi _{k}}\leq \frac{1}{\cos \varphi _{k}}\frac{\sin
\varphi _{k}}{m}=\frac{\tan \varphi _{k}}{m}.  \label{eq:lowersin}
\end{equation}%
From (ii) of Lemma~\ref{opposite}, we have
\begin{equation*}
\underset{k\rightarrow \infty }{\lim }\min (\mathbf{x}_{k})=0.
\end{equation*}%
This implies the inner tolerance $\| f_k \| \rightarrow 0$, i.e., $\Vert
A^{-1}\mathbf{f}_{k}\Vert $ is suitably small. In addition, by Lemma~\ref%
{coslower}, it holds that $\cos \varphi _{k}$ is uniformly bounded below by $%
\min (\mathbf{x})$, therefore,
\begin{equation}
(1+\Vert c\Vert \Vert L_{k}^{-1})\Vert A^{-1}\mathbf{f}_{k}\Vert /\cos
\varphi _{k}<1-\mathbf{c}^{T}L_{k}^{-1}V^{T}\mathbf{x}_{k}/\cos \varphi _{k}
\label{eq:tan2}
\end{equation}
for $k$ large enough.

Using (\ref{eq:tan}), (\ref{eq:lowersin}) and (\ref{eq:tan2}), we obtain%
\begin{align*}
\tan \varphi _{k+1}& \leq \frac{\Vert L_{k}^{-1}\Vert \,\varepsilon
_{k}\,\left( \tan \varphi _{k}+\Vert A^{-1}\mathbf{f}_{k}\Vert /\cos \varphi
_{k}\right) }{1-\mathbf{c}^{T}L_{k}^{-1}V^{T}\mathbf{x}_{k}/\cos \varphi
_{k}-(1+\Vert \mathbf{c}\Vert \Vert L_{k}^{-1}\Vert )\Vert A^{-1}\mathbf{f}%
_{k}\Vert /\cos \varphi _{k}} \\
& \leq \,\frac{\Vert L_{k}^{-1}\Vert \,\varepsilon _{k}\left( \tan \varphi
_{k}+\gamma _{k}\min (\mathbf{x}_{k})\tan \varphi _{k}/m\right) }{1-\mathbf{c%
}^{T}L_{k}^{-1}V^{T}\mathbf{x}_{k}/\cos \varphi _{k}-(1+\Vert \mathbf{c}%
\Vert \Vert L_{k}^{-1}\Vert )\gamma _{k}\min (\mathbf{x}_{k})\tan \varphi
_{k}/m} \\
& \leq \frac{\Vert L_{k}^{-1}\Vert \varepsilon _{k}\left( 1+\gamma _{k}\min (%
\mathbf{x}_{k})/m\right) }{\left( 1-\mathbf{c}^{T}L_{k}^{-1}V^{T}\mathbf{x}%
_{k}/\cos \varphi _{k}\right) -(1+\Vert \mathbf{c}\Vert \Vert
L_{k}^{-1}\Vert )\gamma _{k}\min (\mathbf{x}_{k})\tan \varphi _{k}/m}\tan
\varphi _{k}.
\end{align*}%
%
%
%
%
%
%
%
%
%
%with the first inequality holding by assuming that $(1+\Vert c\Vert
%\Vert L_{k}^{-1})\Vert A^{-1}\mathbf{f}_{k}\Vert /\cos \varphi _{k}<1-%
%\mathbf{c}^{T}L_{k}^{-1}V^{T}\mathbf{x}_{k}/\cos \varphi _{k}$. This
%assumption must be satisfied provided that $\Vert A^{-1}\mathbf{f}_{k}\Vert $
%is suitably small, because, by Lemma~\ref{coslower}, it holds that $\cos
%\varphi _{k}>\min (\mathbf{x})$ for all $k$.
Define
\begin{equation*}
\beta _{k}^{\prime }=\frac{\Vert L_{k}^{-1}\Vert \varepsilon _{k}\left(
1+\gamma _{k}\min (\mathbf{x}_{k})/m\right) }{\left( 1-\mathbf{c}%
^{T}L_{k}^{-1}V^{T}\mathbf{x}_{k}/\cos \varphi _{k}\right) -(1+\Vert \mathbf{%
c}\Vert \Vert L_{k}^{-1}\Vert )\gamma _{k}\min (\mathbf{x}_{k})\tan \varphi
_{k}/m}.
\end{equation*}%
Note that $\beta _{k}^{\prime }$ is a continuous function with respect to $%
\min (\mathbf{x}_{k})$ for $0<\gamma _{k} <1$. Then it holds that $\beta
_{k}^{\prime }\rightarrow \beta _{k}$ defined by (\ref{niconv}) as $\min (%
\mathbf{x}_{k})\rightarrow 0$. Therefore, from (\ref{betak}), for $k$ large
enough we can choose a sufficiently small $\delta $ such that
\begin{equation*}
\beta _{k}^{\prime }\leq (1+\delta )\beta _{k}<\beta <1
\end{equation*}%
for $\min (\mathbf{x}_{k})$ sufficiently small. As a result, we have
\begin{equation*}
\tan \varphi _{k+1}\leq \beta \tan \varphi _{k}
\end{equation*}%
for $k\geq N$ with $N$ large enough and $\min (\mathbf{x}_{k})$ sufficiently
small. This means that $\tan \varphi _{k}\rightarrow 0$, i.e., $\sin \varphi
_{k}\rightarrow \mathbf{0}$. From (iii) of Lemma~\ref{opposite}, $\sin
\varphi _{k}$ is uniformly bounded below by a positive constant. %, a
%contradiction to the assumption that $\lim_{k\rightarrow \infty }\overline{%
%\lambda }_{k}=\alpha >\rho (A^{-1})$.
So $\sin \varphi_{k}\rightarrow \mathbf{0}$ and $\sin \varphi _{k} \geq m$,
a contradiction. Therefore the initial assumption \textquotedblleft $%
\lim_{k\rightarrow \infty }\overline{\lambda }_{k}=\alpha >\rho (A^{-1})$%
\textquotedblright must be false.
\end{proof}

\subsection{Convergence Rates}

Theorem~\ref{main} has proved the global convergence of INI, but the results are only qualitative and do not tell us anything about how fast the INI method converges. In this subsection, we will show the convergence rate of INI with different relaxation factors $\gamma _{k}$. More precisely, we prove that INI converges at least linearly with an asymptotic convergence factor
bounded by $\frac{2\gamma }{1+\gamma }$ for $0\leq \gamma _{k}\leq \gamma <1$ and superlinearly for decreasing $%
\gamma _{k}=\frac{\overline{\lambda }_{k-1}-\overline{\lambda }_{k}}{%
\overline{\lambda }_{k-1}}$, respectively.

From (\ref{eq:updateeig}) we have
\begin{align}
\varepsilon _{k+1}& =\varepsilon _{k}\left( 1-\left( 1-\gamma_k\right) \min
\left( \frac{\mathbf{x}_{k}}{\varepsilon _{k}\mathbf{y}_{k+1}}\right) \right)
\notag \\
& =:\varepsilon _{k}\rho _{k}.  \label{eq:ratioerr}
\end{align}%
Since $\overline{\lambda }_{k}-\overline{\lambda }_{k+1}<\overline{\lambda }%
_{k}-\rho (A^{-1})$, from (\ref{eq:ratioerr}) and (\ref{eq:updateeig}), we
have
\begin{equation*}
\rho _{k}=1-\left( 1-\gamma_k\right) \min \left( \frac{\mathbf{x}_{k}}{%
\varepsilon _{k}\mathbf{y}_{k+1}}\right) =\left( 1-\frac{\overline{\lambda }%
_{k}-\overline{\lambda }_{k+1}}{\overline{\lambda }_{k}-\rho (A^{-1})}%
\right) <1.
\end{equation*}

\begin{theorem}
\label{linearconv}For INI, we have $\rho _{k}\leq \frac{2\gamma _{k}}{%
1+\gamma _{k}}<1.$ Moreover, if $\gamma _{k}\leq \gamma <1,$ then $\underset{%
k\rightarrow \infty }{\lim }\rho _{k}\leq \frac{2\gamma }{1+\gamma }<1$,
i.e., the convergence of INI is at least globally linear. If $\underset{%
k\rightarrow \infty }{\lim }\gamma _{k}=0,$ then $\underset{k\rightarrow
\infty }{\lim }\rho _{k}=0,$ that is, the convergence of INI is globally
superlinear.
\end{theorem}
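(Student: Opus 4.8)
The plan is to control the error ratio $\rho_k=\varepsilon_{k+1}/\varepsilon_k$ appearing in (\ref{eq:ratioerr}) by bounding $\min(\mathbf{x}_k/\mathbf{y}_{k+1})$ from below, and then to determine the exact asymptotic size of that bound using the spectral decomposition of $A_k^{-1}$ in (\ref{BkV})--(\ref{Bkinv}) together with the global convergence $\mathbf{x}_k\to\mathbf{x}$ established in Theorem~\ref{main}.

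First I would extract a finite-$k$ estimate. The upper inequality $\mathbf{y}_{k+1}\le(1+\gamma_k)A_k^{-1}\mathbf{x}_k$ from (\ref{eq: yk+1bound}), together with the identity $\min(\mathbf{a}/\mathbf{b})=1/\max(\mathbf{b}/\mathbf{a})$ for positive vectors, gives, on writing $M_k:=\max(A_k^{-1}\mathbf{x}_k/\mathbf{x}_k)$,
\[ \min\left(\frac{\mathbf{x}_k}{\mathbf{y}_{k+1}}\right)\ge\frac{1}{1+\gamma_k}\min\left(\frac{\mathbf{x}_k}{A_k^{-1}\mathbf{x}_k}\right)=\frac{1}{(1+\gamma_k)M_k}. \]
Substituting into (\ref{eq:ratioerr}) yields the key inequality $\rho_k\le 1-\frac{1-\gamma_k}{(1+\gamma_k)\,\varepsilon_k M_k}$. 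Since $A_k^{-1}\ge 0$ has Perron root $\rho(A_k^{-1})=\varepsilon_k^{-1}$, Lemma~\ref{maxmin} gives $\varepsilon_k M_k>1$; thus for finite $k$ this bound is marginally weaker than $\frac{2\gamma_k}{1+\gamma_k}$, and the stated factor is recovered only in the limit.

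The crux is therefore to prove $\varepsilon_k M_k\to 1$. I would write $\varepsilon_k M_k=\max(\varepsilon_k A_k^{-1}\mathbf{x}_k/\mathbf{x}_k)$ and evaluate the numerator using the decomposition (\ref{eq:decomposition}), $\mathbf{x}_k=\mathbf{x}\cos\varphi_k+\mathbf{p}_k\sin\varphi_k$ with $\mathbf{p}_k=V\mathbf{q}_k$, together with $A_k^{-1}\mathbf{x}=\varepsilon_k^{-1}\mathbf{x}$ and (\ref{BkV}):
\[ \varepsilon_k A_k^{-1}\mathbf{x}_k=\left(\cos\varphi_k-\sin\varphi_k\,\mathbf{c}^T L_k^{-1}\mathbf{q}_k\right)\mathbf{x}+\varepsilon_k\sin\varphi_k\,V L_k^{-1}\mathbf{q}_k. \]
By Theorem~\ref{main}, $\overline{\lambda}_k\to\rho(A^{-1})$, so $\varepsilon_k\to 0$ and $\mathbf{x}_k\to\mathbf{x}$, i.e.\ $\varphi_k\to 0$; moreover $L_k=\overline{\lambda}_k I-L\to\rho(A^{-1})I-L$, which is invertible because the eigenvalues of $L$ are precisely the non-Perron eigenvalues of $A^{-1}$, so $\Vert L_k^{-1}\Vert$ remains bounded. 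Hence the right-hand side tends to $\mathbf{x}$, and since $\mathbf{x}>\mathbf{0}$ and $\mathbf{x}_k\to\mathbf{x}$ the componentwise ratio $\varepsilon_k A_k^{-1}\mathbf{x}_k/\mathbf{x}_k$ tends entrywise to $1$, giving $\varepsilon_k M_k\to 1$.

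Finally I would pass to the limit in the key inequality. If $\gamma_k\le\gamma<1$, then $\frac{1-\gamma_k}{1+\gamma_k}\ge\frac{1-\gamma}{1+\gamma}$, so $\limsup_k\rho_k\le 1-\frac{1-\gamma}{1+\gamma}=\frac{2\gamma}{1+\gamma}<1$, which is the asserted global linear rate. If $\gamma_k\to 0$, then $\frac{1-\gamma_k}{1+\gamma_k}\to 1$, which combined with $\varepsilon_k M_k\to 1$ forces $\rho_k\to 0$ (using also $\rho_k\ge 0$, since $\varepsilon_{k+1}\ge 0$ by Lemma~\ref{monotone2}), the superlinear case. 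I expect the limit $\varepsilon_k M_k\to 1$ to be the main obstacle: the structurally available estimate has $\varepsilon_k M_k>1$ on the wrong side, and it is only the global convergence $\mathbf{x}_k\to\mathbf{x}$ together with the uniform invertibility of $L_k$ that converts it into the sharp asymptotic factor $\frac{2\gamma}{1+\gamma}$.
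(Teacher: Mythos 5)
Your proposal is correct and takes essentially the same route as the paper: the same use of (\ref{eq: yk+1bound}) to obtain $\min\left(\frac{\mathbf{x}_k}{\varepsilon_k\mathbf{y}_{k+1}}\right)\geq\frac{1}{1+\gamma_k}\min\left(\frac{\mathbf{x}_k}{\varepsilon_k A_k^{-1}\mathbf{x}_k}\right)$, the same spectral expression (\ref{rela3}) for $\varepsilon_k A_k^{-1}\mathbf{x}_k$, and the same appeal to Theorem~\ref{main} to conclude $\varepsilon_k A_k^{-1}\mathbf{x}_k\rightarrow\mathbf{x}$ and hence the factor $\frac{2\gamma_k}{1+\gamma_k}$. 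Your added observation that Lemma~\ref{maxmin} forces $\varepsilon_k\max\left(\frac{A_k^{-1}\mathbf{x}_k}{\mathbf{x}_k}\right)>1$, so the stated factor is genuinely recovered only asymptotically, is simply a more careful rendering of a point the paper's own limit argument passes over silently.
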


\begin{proof}
From (\ref{eq: xAf_k}) and (\ref{eq: yk+1bound}), we have%
\begin{equation*}
\min \left( \frac{\mathbf{x}_{k}}{\varepsilon _{k}\mathbf{y}_{k+1}}\right)
\geq \min \left( \frac{\mathbf{x}_{k}}{(1+\gamma _{k})\varepsilon
_{k}A_{k}^{-1}\mathbf{x}_{k}}\right) =\frac{1}{1+\gamma _{k}}\min \left(
\frac{\mathbf{x}_{k}}{\varepsilon _{k}A_{k}^{-1}\mathbf{x}_{k}}\right) .
\end{equation*}%
From (\ref{Bkinv}), we get
\begin{equation}
\varepsilon _{k}A_{k}^{-1}\mathbf{x}_{k}=\mathbf{x}\mathbf{x}^{T}\mathbf{x}%
_{k}-\mathbf{x}\mathbf{c}^{T}L_{k}^{-1}\mathbf{V}^{T}\mathbf{x}%
_{k}+\varepsilon _{k}VL_{k}^{-1}V^{T}\mathbf{x}_{k}.  \label{rela3}
\end{equation}%
From Theorem~\ref{main}, we know that $\underset{k\rightarrow \infty }{\lim }%
\mathbf{x}_{k}=\mathbf{x}$ and $\underset{k\rightarrow \infty }{\lim }%
\overline{\lambda }_{k}=\rho (A^{-1})$, from which it follows that $%
\varepsilon _{k}\rightarrow 0$ and $L_{k}^{-1}\rightarrow \left( \rho
(A^{-1})I-L\right) ^{-1}$. On the other hand, since $L_{k}^{-1}\rightarrow
(\rho (A^{-1})I-L)^{-1}$ and $\underset{k\rightarrow \infty }{\lim }V^{T}%
\mathbf{x}_{k}=V^{T}\mathbf{x}=0$, from (\ref{rela3}) we get
\begin{equation*}
\underset{k\rightarrow \infty }{\lim }\varepsilon _{k}A_{k}^{-1}\mathbf{x}%
_{k}=\mathbf{x}.
\end{equation*}%
Consequently, we obtain
\begin{align*}
\underset{k\rightarrow \infty }{\lim }\min \left( \frac{\mathbf{x}_{k}}{%
\varepsilon _{k}\mathbf{y}_{k+1}}\right) & \geq \frac{1}{1+\gamma _{k}}\min
\left( \underset{k\rightarrow \infty }{\lim }\frac{\mathbf{x}_{k}}{%
\varepsilon _{k}B_{k}^{-1}\mathbf{x}_{k}}\right) \\
& =\frac{1}{1+\gamma _{k}}\min \left( \frac{\mathbf{x}}{\mathbf{x}}\right) =%
\frac{1}{1+\gamma _{k}}>0,
\end{align*}%
leading to
\begin{equation}
\rho _{k}\leq 1-\frac{1-\gamma _{k}}{1+\gamma _{k}}=\frac{2\gamma _{k}}{%
1+\gamma _{k}}<1.  \label{convfactor}
\end{equation}
\end{proof}

It can be seen from (\ref{convfactor}) that if $\gamma _{k}$ is small then INI must ultimately converge quickly.
Although Theorem~\ref{linearconv} has established the superlinear convergence of INI, it does not reveal the convergence order. Our next concern is to derive the precise convergence order of INI. This is more informative and instructive because it lets us understand how fast INI converges.

\begin{theorem}
\label{quadconvini} If the inner tolerance $\xi _{k}$ in INI satisfies
condition (\ref{eq:strategy}) with the relaxation factors
\begin{equation}
\gamma _{k}=\frac{\overline{\lambda }_{k-1}-\overline{\lambda }_{k}}{%
\overline{\lambda }_{k-1}},  \label{innertol}
\end{equation}%
then INI converges quadratically (asymptotically) in the form of
\begin{equation}
\overline{\varepsilon }_{k}\leq 2\overline{\varepsilon }_{k-1}^{2}
\label{convorder}
\end{equation}%
for $k$ large enough, where the relative error $\overline{\varepsilon }%
_{k+1}=\varepsilon _{k}/\rho (A^{-1})$.
\end{theorem}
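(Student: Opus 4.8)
The plan is to take the two estimates that are already established and feed the prescribed relaxation factor into them, so that the per-step reduction factor $\rho_k$ itself becomes of the order of the error. Recall from (\ref{eq:ratioerr}) that the errors $\varepsilon_k=\overline{\lambda}_k-\rho(A^{-1})$ satisfy the exact recurrence $\varepsilon_{k+1}=\varepsilon_k\rho_k$, and from Theorem~\ref{linearconv} (inequality (\ref{convfactor})) that $\rho_k\le\frac{2\gamma_k}{1+\gamma_k}\le 2\gamma_k$. Thus the entire argument reduces to controlling the chosen $\gamma_k$ by the error, and the whole point of the special choice (\ref{innertol}) is that this control is automatic.

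First I would rewrite (\ref{innertol}) in terms of the $\varepsilon_j$. Since $\overline{\lambda}_{k-1}-\overline{\lambda}_k=\varepsilon_{k-1}-\varepsilon_k$ and, by Lemma~\ref{monotone2}, $\overline{\lambda}_k>\rho(A^{-1})$ (so $\varepsilon_k>0$ and $\overline{\lambda}_{k-1}=\rho(A^{-1})+\varepsilon_{k-1}>\rho(A^{-1})$), one gets
\[
\gamma_k=\frac{\overline{\lambda}_{k-1}-\overline{\lambda}_k}{\overline{\lambda}_{k-1}}
=\frac{\varepsilon_{k-1}-\varepsilon_k}{\rho(A^{-1})+\varepsilon_{k-1}}
<\frac{\varepsilon_{k-1}}{\rho(A^{-1})},
\]
where the strict inequality uses $\varepsilon_k>0$ in the numerator and $\varepsilon_{k-1}>0$ in the denominator. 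This is the key observation: with the choice (\ref{innertol}) the relaxation factor is automatically no larger than the previous relative error. Combining this with $\rho_k\le 2\gamma_k$ yields $\rho_k<2\varepsilon_{k-1}/\rho(A^{-1})$, whence
\[
\varepsilon_{k+1}=\varepsilon_k\rho_k<\frac{2\varepsilon_k\,\varepsilon_{k-1}}{\rho(A^{-1})}.
\]

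To reach the stated form (\ref{convorder}) I would then divide by $\rho(A^{-1})$ to pass to the relative errors and invoke the monotonicity $\varepsilon_k<\varepsilon_{k-1}$ from Lemma~\ref{monotone2} to replace the smaller factor $\varepsilon_k$ by $\varepsilon_{k-1}$, collapsing the two-factor bound into a single squared term with constant $2$. I would also record that Theorem~\ref{main} guarantees $\overline{\lambda}_k\to\rho(A^{-1})$, hence $\gamma_k\to0$, so that the estimate is genuinely asymptotic and holds only ``for $k$ large enough,'' consistent with the statement.

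I expect the only real obstacle to be bookkeeping rather than analysis: there is an intrinsic one-step lag between the error that drives $\gamma_k$ (namely $\varepsilon_{k-1}$) and the error being reduced ($\varepsilon_k$), so the estimate produced most naturally is the two-factor bound $\varepsilon_{k+1}\lesssim 2\varepsilon_k\varepsilon_{k-1}/\rho(A^{-1})$. Turning this into the single-index quadratic (\ref{convorder}) relies on the monotone ordering of the $\varepsilon_j$ together with the shifted definition $\overline{\varepsilon}_{k+1}=\varepsilon_k/\rho(A^{-1})$ adopted in the statement, and I would take care to align the indices in (\ref{innertol}), (\ref{eq:ratioerr}) and the final normalization so that the factor $2$ and the square emerge cleanly. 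Notably, no further spectral estimates (the $L_k^{-1}$, $\mathbf{c}$ quantities from the convergence proof) are needed here; they are already absorbed into the factor $\rho_k$ via Theorem~\ref{linearconv}.
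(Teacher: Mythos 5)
Your first two steps are exactly the paper's: the bound $\gamma _{k}\leq \varepsilon _{k-1}/\rho (A^{-1})$ is the paper's inequality (\ref{eq: gamma}), and feeding it into the contraction bound of Theorem~\ref{linearconv} is the paper's strategy as well. Applied with consistent indices (the factor $\gamma _{k}$ is the one that enters the computation of $\mathbf{y}_{k+1}$, so (\ref{convfactor}) gives $\rho _{k}\leq \frac{2\gamma _{k}}{1+\gamma _{k}}$), this yields precisely your two-factor bound
\begin{equation*}
\varepsilon _{k+1}=\varepsilon _{k}\rho _{k}\leq \frac{2}{\rho (A^{-1})}\,\varepsilon _{k}\varepsilon _{k-1}.
\end{equation*}
The genuine gap is your last step, the conversion of this into (\ref{convorder}). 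Replacing the smaller factor $\varepsilon _{k}$ by the larger $\varepsilon _{k-1}$ \emph{weakens} the bound: it gives $\varepsilon _{k+1}\leq \frac{2}{\rho (A^{-1})}\varepsilon _{k-1}^{2}$, a squaring with a \emph{two}-step lag, and no relabelling can repair this, because the shifted definition $\overline{\varepsilon }_{j+1}=\varepsilon _{j}/\rho (A^{-1})$ shifts both sides of the inequality equally and therefore cannot shrink the lag between the indices. In that notation your bound reads $\overline{\varepsilon }_{k+2}\leq 2\overline{\varepsilon }_{k}^{2}$, whereas (\ref{convorder}) demands $\overline{\varepsilon }_{k+2}\leq 2\overline{\varepsilon }_{k+1}^{2}$, a strictly stronger statement since $\overline{\varepsilon }_{k+1}\leq \overline{\varepsilon }_{k}$. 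To reach (\ref{convorder}) you would have to replace $\varepsilon _{k-1}$ by $\varepsilon _{k}$, which is exactly the direction monotonicity forbids. What your recurrence does prove is $\overline{\varepsilon }_{k+1}\leq 2\,\overline{\varepsilon }_{k}\overline{\varepsilon }_{k-1}$, a Fibonacci-type bound whose convergence order is the golden ratio $\frac{1+\sqrt{5}}{2}$ (the order established in \cite{Jia14}), not $2$.

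You should also know that the paper's own proof does not close this gap either; it steps over it with an off-by-one application of (\ref{convfactor}). The paper writes $\varepsilon _{k}=\varepsilon _{k-1}\rho _{k-1}\leq \varepsilon _{k-1}\frac{2\gamma _{k}}{1+\gamma _{k}}$, i.e.\ it bounds $\rho _{k-1}$ by the expression in the \emph{current} factor $\gamma _{k}$, while Theorem~\ref{linearconv} only provides $\rho _{k-1}\leq \frac{2\gamma _{k-1}}{1+\gamma _{k-1}}$. Since $\rho _{k-1}=\varepsilon _{k}/\varepsilon _{k-1}$ by (\ref{eq:ratioerr}) and, by (\ref{innertol}), $\gamma _{k}=(\varepsilon _{k-1}-\varepsilon _{k})/(\rho (A^{-1})+\varepsilon _{k-1})$, the inequality $\rho _{k-1}\leq \frac{2\gamma _{k}}{1+\gamma _{k}}$ is precisely what delivers the quadratic rate, and it is not implied by anything established earlier in the paper. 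So the obstacle you called ``bookkeeping'' is not an artifact of your write-up: the one-step lag between the error $\varepsilon _{k-1}$ that drives the computable choice (\ref{innertol}) and the error $\varepsilon _{k}$ being reduced is intrinsic, and with the estimates available the honest conclusion is superlinear convergence of order $\frac{1+\sqrt{5}}{2}$ rather than the asymptotically quadratic form (\ref{convorder}).
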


\begin{proof}
Since $\overline{\lambda }_{k-1}>\overline{\lambda }_{k}>\rho (A^{-1}),$ we
have%
\begin{equation}
\gamma _{k}=\frac{\overline{\lambda }_{k-1}-\overline{\lambda }_{k}}{%
\overline{\lambda }_{k-1}}\leq \frac{\overline{\lambda }_{k-1}-\rho (A^{-1})%
}{\rho (A^{-1})}=\frac{\varepsilon _{k-1}}{\rho (A^{-1})}.  \label{eq: gamma}
\end{equation}%
From (\ref{eq:ratioerr}), (\ref{convfactor}) and (\ref{eq: gamma}), we have%
\begin{eqnarray*}
\varepsilon _{k} &=&\varepsilon _{k-1}\rho _{k-1}\leq \varepsilon _{k-1}%
\frac{2\gamma _{k}}{1+\gamma _{k}}=\varepsilon _{k-1}\frac{2}{1+\frac{1}{%
\gamma _{k}}} \\
&\leq &\varepsilon _{k-1}\frac{2}{1+\frac{\rho (A^{-1})}{\varepsilon _{k-1}}}%
=\varepsilon _{k-1}^{2}\frac{2}{\varepsilon _{k-1}+\rho (A^{-1})} \\
&\leq &\frac{2}{\rho (A^{-1})}\varepsilon _{k-1}^{2}.
\end{eqnarray*}%
Dividing both sides of the above inequality by $\rho (A^{-1})$, we get
\begin{equation*}
\overline{\varepsilon }_{k}=\frac{\varepsilon _{k}}{\rho (A^{-1})}\leq \frac{%
2}{\rho (A^{-1})^{2}}\varepsilon _{k-1}^{2}=2\overline{\varepsilon }%
_{k-1}^{2}.
\end{equation*}
\end{proof}

\section{The modified inexact Noda iteration}

In this section, we propose a modified Noda iteration (MNI) for a non-negative matrix, and show that MNI and NI are equivalent. Thus, by combining INI (Algorithm \ref{alg:ini}) with MNI we can propose a modified inexact Noda iteration for a monotone matrix

\subsection{The modified Noda iteration}
When $\widehat{\lambda }_{k}I-B$ tends to a singular matrix, the Noda
iteration requires us to solve a possibly ill-conditioned linear system (\ref%
{eq:step1}). Hence, we propose a rank one update technique for the ill-conditioned
linear system (\ref{eq:step1}), i.e.,%
\begin{equation}
\left(
\begin{array}{cc}
B-\widehat{\lambda }_{k}I & -\mathbf{x}_{k} \\
-\mathbf{x}_{k}^{T} & 0%
\end{array}%
\right) \left(
\begin{array}{c}
\Delta \mathbf{y}_{k} \\
\delta _{k}%
\end{array}%
\right) =\left[
\begin{array}{c}
(\widehat{\lambda }_{k}I-B)\,\mathbf{x}_{k} \\
0%
\end{array}%
\right] ,  \label{eq: NewtonNI}
\end{equation}%
where $\Delta \mathbf{y}_{k}=$ $\mathbf{x}_{k+1}-\mathbf{x}_{k}.$ Let $%
\mathbf{r}_{k}=(\widehat{\lambda }_{k}I-B)\,\mathbf{x}_{k}.$
In general, the linear system (\ref{eq: NewtonNI}) is a well-conditioned linear system, unless $B$ has the Jordan form corresponding to the largest eigenvalue, which contradicts the Perron--Frobenius theorem.

From (\ref{eq: NewtonNI}),
\begin{eqnarray*}
0 &=&\left( B-\widehat{\lambda }_{k}I\right) \left( \mathbf{x}_{k+1}-\mathbf{%
x}_{k}\right) -\delta _{k}\mathbf{x}_{k}-\mathbf{r}_{k} \\
&=&\left( B-\widehat{\lambda }_{k}I\right) \mathbf{x}_{k+1}-\left( B-%
\widehat{\lambda }_{k}I\right) \mathbf{x}_{k}-\delta _{k}\mathbf{x}_{k}-%
\mathbf{r}_{k} \\
&=&\left( B-\widehat{\lambda }_{k}I\right) \mathbf{x}_{k+1}-\delta _{k}%
\mathbf{x}_{k}.
\end{eqnarray*}%
Hence, we have the following linear system%
\begin{equation*}
\left( \widehat{\lambda }_{k}I-B\right) \left( \frac{\mathbf{x}_{k+1}}{%
-\delta _{k}}\right) =\mathbf{x}_{k},
\end{equation*}%
or%
\begin{equation*}
\left[ \widehat{\lambda }_{k}I-B\right] \mathbf{y}_{k+1}=\mathbf{x}_{k},
\end{equation*}%
with $\mathbf{y}_{k}=\frac{-\mathbf{1}}{\delta _{k}}\mathbf{x}_{k+1}.$ Thus,
from (\ref{eq:step1}) and (\ref{eq: NewtonNI}), we have the new iterative vector
\begin{equation}
\mathbf{x}_{k+1}=\frac{\mathbf{y}_{k+1}}{\left\Vert \mathbf{y}%
_{k+1}\right\Vert }=\frac{\mathbf{x}_{k}+\Delta \mathbf{y}_{k}}{\left\Vert
\mathbf{x}_{k}+\Delta \mathbf{y}_{k}\right\Vert }.  \label{eq: updatexk}
\end{equation}%
This means the Noda iteration and the modified Noda iteration are
mathematically equivalent. Based on (\ref{eq: NewtonNI}) and (\ref{eq:
updatexk}), we state our algorithm as follows.

\begin{algorithm}
\begin{enumerate}
  \item   Given $\widehat{\lambda}_{0}$, $\bx_0 > \zero$ with $\Vert \bx_0\Vert =1$ and ${\sf tol}>0$.
  \item   {\bf for} $k =0,1,2,\dots$
  \item   \quad {\bf if} $\Vert B\bx_{k+1}-\widehat{\lambda}_{k}\bx_{k+1}\Vert > {\sf \sqrt{tol}}$
  \item   \quad\quad Solve $(\widehat{\lambda}_{k}I-B)\,\mathbf{y}_{k+1}=\mathbf{x}_{k}$.
  \item   \quad\quad Normalize the vector $\bx_{k+1}= \by_{k+1}/\Vert \by_{k+1}\Vert$.
  \item   \quad {\bf else if}
  \item   \quad\quad Solve $\left(
\begin{array}{cc}
B-\widehat{\lambda}_{k} & -\mathbf{x}_k \\
-\mathbf{x}_k^{T} & 0%
\end{array}%
\right)\left(
\begin{array}{c}
\Delta\mathbf{y}_{k} \\
\delta _{k}%
\end{array}%
\right)  =\left[
\begin{array}{c}
\widehat{\lambda}_{k}\mathbf{x}_{k}-B\mathbf{x}_{k} \\
0%
\end{array}%
\right].$
  \item   \quad\quad Normalize the vector $\bx_{k+1}= (\bx_{k}+\Delta \by_{k})/\Vert \bx_{k}+\Delta \by_{k}\Vert$.
  \item   \quad {\bf end}
  \item   \quad Compute $\widehat{\lambda}_{k+1}= \max \left(\frac{B\bx_{k+1}}{\bx_{k+1}}\right)$.
  \item   {\bf until} convergence: $ \Vert B\bx_{k+1}-\widehat{\lambda}_{k}\bx_{k+1}\Vert <{\sf tol}$.
\end{enumerate}
\caption{Modified Noda Iteration (MNI)}
\label{alg:MNI}
\end{algorithm}

Note that the sequence $\{\widehat{\lambda }_{k}I-B\}$ tends to a singular
matrix, meaning that $\{\widehat{\lambda }_{k}\}$ converges to an eigenvalue
of $B$, and (\ref{eq:step1}) becomes an ill-conditioned linear system. Based on
practical experiments, we propose taking $\Vert B\bx_{k+1}-\widehat{\lambda}_{k}\bx_{k+1}\Vert \leq \sqrt{\mathrm{tol}}$ and
switching from (\ref{eq:step1}) to (\ref{eq: NewtonNI}).

\subsection{The modified inexact Noda iteration}
For a monotone matrix $A$, we replaced $B$ by $ A^{-1}$ in (\ref{eq: NewtonNI}).
The linear system (\ref{eq: NewtonNI}) can be rewritten as
\begin{equation}
\left(
\begin{array}{cc}
I-\widehat{\lambda}_{k}A & -A\mathbf{x}_k \\
-\mathbf{x}_k^{T} & 0%
\end{array}%
\right)\left(
\begin{array}{c}
\Delta\mathbf{y}_{k} \\
\delta _{k}%
\end{array}%
\right)  =\left[
\begin{array}{c}
\widehat{\lambda}_{k}A\mathbf{x}_{k}-\mathbf{x}_{k} \\
0%
\end{array}%
\right]. \label{eq: NewtonNI2}
\end{equation}
Based on MNI, by combining INI (Algorithm \ref{alg:ini}) with equation (\ref{eq: NewtonNI2}), we can propose a modified inexact Noda iteration for a monotone matrix, which is described as Algorithm~\ref{alg:MINI_M}.
\begin{algorithm}
\begin{enumerate}
  \item   Given $\widehat{\lambda}_{0}$, $\bx_0 > \zero$ with $\Vert \bx_0\Vert =1$ and ${\sf tol}>0$.
  \item   {\bf for} $k =0,1,2,\dots$
  \item   \quad {\bf if} $= \Vert A\bx_{k+1}-\overline{\lambda }_{k}^{-1}\bx_{k+1}\Vert > {\sf \sqrt{tol}}$
  \item   \quad\quad Run INI for monotone matrix $A$ (Algorithm \ref{alg:ini}).
  \item   \quad {\bf else if}
  \item   \quad\quad Solve $\left(
\begin{array}{cc}
I-\overline{\lambda }_{k}A & -A\mathbf{x}_k \\
-\mathbf{x}_k^{T} & 0%
\end{array}%
\right)\left(
\begin{array}{c}
\Delta\mathbf{y}_{k} \\
\delta _{k}%
\end{array}%
\right)  =\left[
\begin{array}{c}
\overline{\lambda }_{k}A\mathbf{x}_{k}-\mathbf{x}_{k} \\
0%
\end{array}%
\right]$ exactly.
  \item   \quad\quad Normalize the vector $\bx_{k+1}= (\bx_{k}+\Delta \by_{k})/\Vert \bx_{k}+\Delta \by_{k}\Vert$.
  \item   \quad\quad Compute $\overline{\lambda }_{k+1}$  that satisfies condition (\ref{eq:updateeig}).
  \item   \quad {\bf end}
  \item   {\bf until} convergence: $\Vert A\bx_{k+1}-\overline{\lambda }_{k}^{-1}\bx_{k+1}\Vert <{\sf tol}$.
\end{enumerate}
\caption{Modified Inexact Noda Iteration (MINI)}
\label{alg:MINI_M}
\end{algorithm}

\section{Numerical experiments}

\label{sec:exp}  In this section we present numerical experiments to support our theoretical results for INI, and to illustrate the effectiveness of the proposed MINI algorithms.
All numerical tests were performed on an Intel (R) Core (TM) i$7$ CPU $4770$%
@ $3.4$GHz with $16$ GB memory using Matlab R$2013a$ with the machine
precision $\epsilon =2.22\times 10^{-16}$ under the Microsoft Windows $7$ $%
64 $-bit.

$I_{\mathrm{outer}}$ denotes the number of outer iterations to achieve the
convergence, and $I_{\mathrm{inner}}$ denotes the total number of inner
iterations, which measures the overall efficiency of MNI and MINI. In view
of the above, we have the average number $I_{ave}=I_{inner}/I_{outer}$ at
each outer iteration for our test algorithms. In the tables,
\textquotedblleft Positivity\textquotedblright illustrates whether the
converged Perron vector preserves the strict positivity property. If
\textquotedblleft No\textquotedblright, then the percentage in the brace
indicates the proportion that the converged Perron vector has the positive
components. We also report the CPU time of each algorithm, which measures
the overall efficiency too.

\subsection{INI for computing the smallest eigenvalue of a monotone matrix}

We present an example to illustrate the numerical behavior of NI, INI\_1 and
INI\_2 for monotone matrices. The approximate solution $\mathbf{y}_{k+1}$
of (\ref{eq:inexactsys2}) satisfies%
\begin{equation*}
(\overline{\lambda}_kA-I) \, \mathbf{y}_{k+1}= A\mathbf{x}_k+\mathbf{f}_k
\end{equation*}
by requiring the following inner tolerances:

\begin{itemize}
\item for NI: $\Vert \mathbf{f}_k\Vert \le 10^{-14}$;

\item for INI\_1: $\Vert \mathbf{f}_k\Vert \le \gamma_k \mathrm{sep}%
(0,A)\min(\mathbf{x}_k)$ with some $0< \gamma_k< 1$;

\item for INI\_2: $\Vert \mathbf{f}_k\Vert \le \frac{\overline{\lambda}%
_{k-1}-\overline{\lambda}_k}{\overline{\lambda}_{k-1}}\mathrm{sep}(0,A)\min(%
\mathbf{x}_k)$ for $k \ge 1$ and $\overline{\lambda}_0 > \rho(A^{-1})$.
\end{itemize}
We use the minimal residual method to solve the inner linear systems.
For the implementations, we use the standard Matlab function \textsf{minres}.
The outer iteration starts with the normalized vector of $\left(1,\dots,1\right)^T$,
and the stopping criterion for outer iterations is
\begin{equation*}
\frac{\Vert A\mathbf{x}_k - \overline{\lambda}_k^{-1} \mathbf{x}_k\Vert} {%
(\|A\|_1\|A\|_{\infty})^{1/2}} \le 10^{-10},
\end{equation*}
where $\|\cdot\|_1$ and $\|\cdot\|_{\infty}$ are the one norm and the
infinity norm of a matrix, respectively.
%As we known that $\mathrm{sep}(0,A) = \sigma_{min}(A)$,

Condition (\ref{eq:strategy}) ensures that the eigenvector in Lemma \ref{monotone2} does indeed preserve the strict positivity property. However, the formula in (\ref{eq:strategy}) is not applicable in practice, because it uses $\mathrm{sep}(0,A)$, which is unknown at the time it needs to be computed . Therefore, for practical implementations, we suggest a relaxation strategy to replace
$\mathrm{sep}(0,A)$ by $\overline{\lambda}_k^{-1}$. The quantity $\overline{%
\lambda}_k^{-1}$ is related to the lower bound of the smallest eigenvalue of
$A$, i.e., $\sigma_{min}(A)\geq \overline{\lambda}_k^{-1}$.
For all examples, the stopping criterion for the inner iteration is set at
\begin{equation*}
\Vert \mathbf{f}_{k}\Vert \leq \max \{\gamma _{k}\min (\mathbf{x}_{k})/%
\overline{\lambda }_{k},10^{-13}\}\text{ for INI\_1}
\end{equation*}%
and%
\begin{equation*}
\Vert \mathbf{f}_{k}\Vert \leq \max \{\frac{\overline{\lambda }_{k-1}-%
\overline{\lambda }_{k}}{\overline{\lambda }_{k-1}\overline{\lambda }_{k}}%
\min (\mathbf{x}_{k}),10^{-13}\}\text{ for INI\_2.}
\end{equation*}

\begin{example}
\label{exp:FEM}We consider the finite-element discretization of the boundary
value problem in \cite[Example 4.2.4]{Axels90}
\begin{eqnarray*}
-u_{xx}-u_{yy} &=&g(x,y)\text{ \ \ in }\Omega =[0,a]\times \lbrack 0,b], \\
a,b &>&0,\text{ \ \ \ \ \ \ \ \ }u=f(x,y)\text{ on }\partial \Omega ,
\end{eqnarray*}%
using piecewise quadratic basis functions on the uniform mesh of $p\times m$
isosceles right-angled triangles. This is a matrix of order $%
n=(2p-1)(2m-1)=127,041$ with $p=400$ and $m=80$.
\end{example}

For Example 1, we see that, for this monotone matrix eigenproblem, INI\_1, with two different $\gamma_k=0.5$ and $0.8$ exhibits distinct convergence behaviors and uses $51$ and $18$ outer iterations to achieve the desired accuracy, respectively. As Figure~\ref{fig:FEM} indicates, NI and INI\_2 typically converge superlinearly, and INI\_1 with $\gamma_k=0.5, 0.8$ typically converge linearly. This confirms our theory and demonstrates that the results of our theorem can be realistic and pronounced.

\begin{figure}
\centering
\epsfig{file=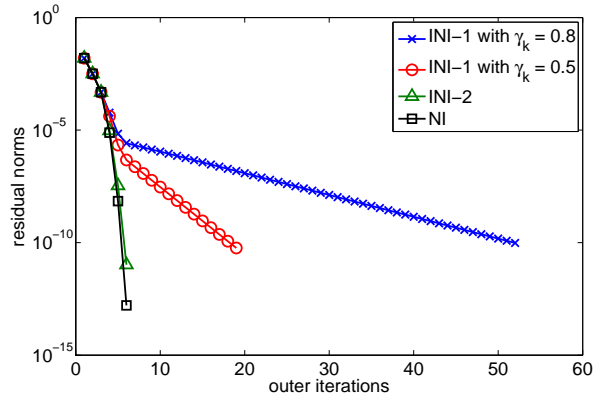,width=.6\textwidth}
\caption{The outer residual norms versus outer iterations in Examples
\protect\ref{exp:FEM}.}
\label{fig:FEM}
\end{figure}

\begin{table}
\caption{The total outer and inner iterations in Example \protect\ref%
{exp:FEM}}
\label{table1}%\begin{center}
\begin{tabular}{l|rrrrl}
\hline
Method & $I_{\mathrm{outer}}$ & $I_{\mathrm{inner}}$ & $I_{\mathrm{ave}}$ &
CPU time & Positivity \\ \hline
INI\_1 with $\gamma =0.8$ & 51 & 19622 & 384 & 76 & Yes \\
INI\_1 with $\gamma =0.5$ & 18 & 11233 & 624 & 38 & Yes \\
NI & 5 & 3621 & 724 & 25 & Yes \\
INI\_2 & 5 & 3591 & 718 & 19 & Yes \\ \hline
\end{tabular}%

\end{table}

We observe from Table~\ref{table1} that all the converged eigenvectors are positive, and that INI\_2 improves the overall efficiency of NI. As we see, the INI\_1 algorithm converges linearly and slowly. To be precise, INI\_1 needs between twice and three times the CPU time of INI\_2, but $I_{\mathrm{ave}}$ for INI\_1 is only half $I_{\mathrm{ave}}$ of INI\_2. There are two reasons for this. First, since the approximate eigenvalues are obtained from the relation (\ref{eq:updateeig}), then the parameter $\gamma_k$ will lead to a difference in the convergence rates, as is seen from Figure~\ref{fig:FEM}. Second, from (\ref{eq:strategy}), INI\_2 solves the inner linear systems more and more accurately as $k$ increases . In contrast, the inner tolerance used by INI\_1 is fixed except for the factor $\min \left( \,\mathbf{x}_{k}\right)$, which also makes the average number of the iterations of INI\_1 only about half of those for INI\_2.

\subsection{MINI for computing the smallest singular value of an $M$-matrix}

In the above section, INI\_2 was considerably better than NI and INI\_1 for overall efficiency. Therefore, in this subsection, we use MINI (INI\_2 combined with MNI) to find the smallest singular value and the associated eigenvector of an $M$-matrix, and confirm the effectiveness of MINI and the theory we presented in Sections~\ref{sec:type1} and \ref{sec:conv}. For MINI, the stopping criteria for inner and outer iterations are the same as those for monotone matrices. In the meantime , we compare MINI with the algorithms JDQR \cite{S02} and JDSVD \cite{Hoch01} and the Matlab function \textsf{svds}; none of these are positivity preserving for approximate eigenvectors.
We show that the MINI algorithm always reliably computes positive eigenvectors, while the other algorithms generally fail to do so.

Since JDQR and JDSVD use the absolute residual norms to decide the
convergence, then we set the stopping criteria ``TOL$=10^{-10}{%
(\|A\|_1\|A\|_{\infty})^{1/2}}$'' for outer iterations, and then we will get the same stopping criteria as used for MINI.
We set the parameters ``sigma=SM'' for
JDQR, ``opts.target=0'' for JDSVD, and the inner solver
``OPTIONS.LSolver=minres''. All the other options use defaults. We do not
use any preconditioning for inner linear systems. For \textsf{svds}, we set
the stopping criteria ``OPTS.tol$=10^{-10}{(\|A\|_1\|A\|_{\infty})^{1/2}}$,
and take the maximum and minimum subspace dimensions as $20$ and $2$ at each
restart, respectively.

Suppose that we want to compute the smallest singular value, and the
corresponding singular vector, of a real $n\times n$ M-matrix $M$. This
partial SVD can be computed by using equivalent eigenvalue decomposition,
that is, the augmented matrix
\begin{equation*}
A = \left[
\begin{array}{cc}
0 & M \\
M^{T} & 0%
\end{array}%
\right].
\end{equation*}
Obviously, such a matrix $A$ is no longer an $M$-matrix but will indeed be monotone.

\begin{example}
\label{exp:rgg_n_2_19_s0}We consider a symmetric $M$-matrix of the form $%
M=\sigma I-B$, where $B$ is the non-negative matrix \textsf{rgg\_n\_2\_19\_s0}
from the DIMACS10 test set \cite{DIMACS}. This matrix is a random geometric
graph with $2^{19}$ vertices. Each vertex is a random point in the unit
square and edges connect vertices whose Euclidean distance is below $0.55$ ($%
\log(n)/n$). This threshold is chosen in order to ensure that the graph is
almost connected. This matrix is a binary matrix with $n=2^{19}=524,288$ and
$6,539,532$ nonzero entries.
\end{example}

For this problem, MINI works very well and uses only six outer iterations to attain the desired accuracy. Furthermore, it is reliable and positivity preserving. In contrast, JDQR, JDSVD, and \textsf{svds} compute the desired eigenvalue, but the converged eigenvectors are not positive. More precisely, Table~\ref{table2} indicates that for these algorithms roughly 50\% of the components of each converged eigenvector are negative.

As far as overall efficiency is concerned, MINI is the most efficient in terms of $I_{\mathrm{inner}}$, $I_{\mathrm{outer}}$ and the CPU time. JDQR and \textsf{svds} require at least five times the CPU time of MINI; they are also more expensive than JDSVD in terms of the CPU time.
\begin{table}[ht]
\caption{The total outer and inner iterations in Example \protect\ref%
{exp:rgg_n_2_19_s0}}
\label{table2}%\begin{center}
\begin{tabular}{l|rrrrl}
\hline
Method & $I_{\mathrm{outer}}$ & $I_{\mathrm{inner}}$ & $I_{ave}$ & CPU time
& Positivity \\ \hline
MINI & 6 & 331 & 55 & 30 & Yes \\
JDQR & 25 & 4068 & 162 & 243 & No (52\%) \\
JDSVD & 34 & 1432 & 42 & 58 & No (51\%) \\
\textsf{svds} & 140 & ----- & 140 & 144 & No (57\%) \\ \hline
\end{tabular}%

\end{table}

\section{Conclusions}

We have proposed an inexact Noda iteration method for computing the smallest eigenpair of a large irreducible monotone matrix, and have considered the convergence of the modified inexact Noda iteration with two relaxation factors. We have proved that the convergence of INI is globally linear and superlinear, with the asymptotic convergence factor bounded by $\frac{2\gamma _{k}}{1+\gamma
_{k}}$. More precisely, the modified inexact Noda iteration with inner tolerance $\xi
_{k}=\Vert \mathbf{f}_{k}\Vert \leq \gamma _{k}\mathrm{sep}(0,A)\min (%
\mathbf{x}_{k})$ converges at least linearly if the relaxation factors meet the condition $%
\gamma _{k}\leq \gamma <1$, and superlinearly if the relaxation
factors meet the condition $\gamma _{k}=\frac{\overline{\lambda }_{k-1}-\overline{\lambda }_{k}}{%
\overline{\lambda }_{k-1}}$, respectively. The results for INI clearly show how the accuracy
of the inner iterations affects the convergence of the outer iterations.

In the experiments, we also compared MINI with Jacobi--Davidson type methods (JDQR, JDSVD) and the implicitly restarted Arnoldi method (\textsf{svds}). The contribution of this paper is twofold. First, MINI always preserves the positivity of approximate eigenvectors, while the other three methods often fail to do so. Second, the proposed MINI algorithms have been shown to be practical and effective for large monotone matrix eigenvalue problems and $M$-matrix singular value problems.

\end{document}